\newtheorem{theorem}{Theorem}[section]
\newtheorem{lemma}[theorem]{Lemma}
\newtheorem{proposition}[theorem]{Proposition}
\newtheorem{corollary}[theorem]{Corollary}
\theoremstyle{definition}
\theoremstyle{remark}
\newtheorem{remark}[theorem]{Remark}
\numberwithin{equation}{section}
\begin{document}

\title [Some inequalities of matrix power and Karcher means ]{Some inequalities of matrix power and Karcher means for positive linear maps}

\author[R. Lashkaripour, M. Hajmohamadi,  M. Bakherad,   ]{R. Lashkaripour$^1$, M. Hajmohamadi$^2$, M. Bakherad$^3$}

\address{$^1$$^{,2}$$^{,3}$ Department of Mathematics, Faculty of Mathematics, University of Sistan and Baluchestan, Zahedan, I.R.Iran.}

\email{$^{1}$lashkari@hamoon.usb.ac.ir}
\email{$^{2}$monire.hajmohamadi@yahoo.com}

\email{$^{3}$mojtaba.bakherad@yahoo.com; bakherad@member.ams.org}

\subjclass[2010]{47A64, 47A63, 47A60, 47A30.}

\keywords{Matrix power means, Karcher means,positive definite matrix, Positive linear mapping, unitarily invariant norm.}
\begin{abstract}
In this paper, we generalize some matrix inequalities involving matrix power and Karcher means of positive definite matrices. Among other inequalities, it is shown that if
${\mathbb A}=(A_{1},...,A_{n})$ is a $n$-tuple of positive definite matrices such that  $0<m\leq A_{i}\leq M\, (i=1,\cdots,n)$ for some scalars $m< M$ and $\omega=(w_{1},\cdots,w_{n})$ is a weight vector with $w_{i}\geq0$ and $\sum_{i=1}^{n}w_{i}=1$, then
\begin{align*}
\Phi^{p}\Big(\sum_{i=1}^{n}w_{i}A_{i}\Big)\leq \alpha^{p}\Phi^{p}(P_{t}(\omega; {\mathbb A}))
\end{align*}
and
\begin{align*}
\Phi^{p}\Big(\sum_{i=1}^{n}w_{i}A_{i}\Big)\leq \alpha^{p}\Phi^{p}(\Lambda(\omega; {\mathbb A})),
\end{align*}
where $p>0$, $\alpha=\max\Big\{\frac{(M+m)^{2}}{4Mm}, \frac{(M+m)^{2}}{4^{\frac{2}{p}}Mm}\Big\}$,  $\Phi$  is a positive unital linear map and  $t\in [-1, 1]\backslash \{0\}$.
\end{abstract} \maketitle
\section{Introduction and preliminaries}

\noindent Let $\mathcal{M}_n$ be the $C^*$-algebra of all
$n\times n$ complex matrices and  $\langle\,.\,,\,.\,\rangle$ be the standard scalar
product in $\mathbb{C}^n$ with the identity $I$. For Hermitian matrices $A, B\in
\mathcal{M}_n$, we write  $A\geq 0$ if $A$ is positive semidefinite,
$A>0$ if $A$ is positive definite, and $A\geq B$ if $A-B\geq0$. If $m, M$ be real scalars, then we mean $m\leq A\leq M$ that $mI\leq A\leq MI$.

The Gelfand map $f(t)\mapsto f(A)$ is an
isometrical $*$-isomorphism between the $C^*$-algebra
$C({\rm sp}(A))$ of continuous functions on the spectrum ${\rm sp}(A)$
of a Hermitian matrix $A$ and the $C^*$-algebra generated by $A$ and $I$. If $f, g\in C({\rm sp}(A))$, then
$f(t)\geq g(t)\,\,(t\in{\rm sp}(A))$ implies that $f(A)\geq g(A)$. A linear map $\Phi$ on $\mathcal{M}_n$ is positive if $\Phi(A)\geq0$ whenever $ A\geq0$. It is said to be unital if $\Phi(I)=I$. A norm $|||\cdot|||$ on $\mathcal{M}_n$ is said to be unitarily invariant norm if $|||UAV|||=|||A|||$, for all unitary matrices $U$ and $V$.\\
Let $A, B\in \mathcal{M}_n$ be two  positive definite and $t\in [0, 1]$. The operator t-weighted arithmetic, geometric, and harmonic means of $A, B$ are defined by $A\nabla_{t}B=(1-t)A+tB$, $A\sharp_{t}B=A^{\frac{1}{2}}(A^{-\frac{1}{2}}BA^{-\frac{1}{2}})^{t}A^{\frac{1}{2}}$ and $A!_{t}B=((1-t)A^{-1}+tB^{-1})^{-1}$ respectively, in which $A!_{t}B\leq A\sharp_{t}B\leq A\nabla_{t}B.$
In particular, for $t=\frac{1}{2}$ we get the operator arithmetic mean $\nabla$, the geometric mean $\sharp$ and the harmonic mean $!$. The AM-GM inequality reads
\begin{align}\label{32}
\frac{A+B}{2}\geq A\sharp B.
\end{align}
In \cite{pal}, Lim and Palfia have introduced matrix power means of positive definite matrices of some fixed dimension.
 If ${\mathbb A}=(A_{1},\cdots,A_{n})$ is a $n$-tuple of positive definite matrices $A_i\,\,(i=1,\cdots,n)$ and $\omega=(w_{1},\cdots,w_{n})$ is a positive probability weight vector where $w_{i}\geq0\,\,(i=1,\cdots,n)$ and $\sum_{i=1}^{n}w_{i}=1$, then the matrix power means  $P_{t}(\omega; {\mathbb A})$ is defined to be the unique positive definite solution of the non-linear equation:
\begin{align*}
X=\sum_{i=1}^{n}w_{i}(X\sharp_{t}A_{i}), \,\, t\in (0, 1]
\end{align*}
For $t\in[-1,0)$, it is defined by  $P_{t}(\omega; {\mathbb A})= P_{-t}(\omega; {\mathbb A^{-1}})^{-1}$, where ${\mathbb A}^{-1}=(A_{1}^{-1},\cdots,A_{n}^{-1})$.\\
 We denote $P_{1}(\omega; {\mathbb A})=\sum_{i=1}^{n}w_{i}A_{i}$ and $P_{-1}(\omega; {\mathbb A})=(\sum_{i=1}^{n}w_{i}A_{i}^{-1})^{-1}$, the weighted arithmetic and harmonic means of $A_{1},\cdots,A_{n}$, respectively.\\
There is one of important properties of matrix power means $P_{t}(\omega; {\mathbb A})$, that $P_{t}(\omega; {\mathbb A})$ interpolates between the weight harmonic and arithmetic means:
\begin{align}\label{11}
\left(\sum_{i=1}^{n}w_{i}A_{i}^{-1}\right)^{-1}\leq P_{t}(\omega; {\mathbb A})\leq \sum_{i=1}^{n}w_{i}A_{i}
\end{align}
for all $t\in [-1, 1]\backslash\{0\}$.\\
The Karcher means of $n$ positive probability vectors in ${\mathbb R^{n}}$ convexity spanned by the unit coordinate vectors, is defined as the unique positive definite solution of the equation:
\begin{align}\label{1}
\sum_{i=1}^{n}w_{i}\log\left(X^{\frac{1}{2}}A_{i}^{-1}X^{\frac{1}{2}}\right)=0.
\end{align}
The Karcher means denoted by $\Lambda(\omega; {\mathbb A})$, where it follows from \eqref{1} that $\Lambda(\omega; {\mathbb A}^{-1})^{-1}=\Lambda(\omega; {\mathbb A})$. It is well known that (see \cite{pal})
\begin{align}\label{14}
\lim_{t\rightarrow 0} P_{t}(\omega; {\mathbb A})=\Lambda(\omega; {\mathbb A})
\end{align}
and
\begin{align}\label{karcher}
\left(\sum_{i=1}^{n}w_{i}A_{i}^{-1}\right)^{-1}\leq \Lambda(\omega; {\mathbb A})\leq \sum_{i=1}^{n}w_{i}A_{i}.
\end{align}
For further information about the matrix power mean, Karcher mean and their properties, we refer the readers to \cite{pal, Law, Yam} and references therein.\\
It is well known that for the two positive  definite matrices $A, B$, if $A\geq B$, then
\begin{equation}\label{2}
  A^{p}\geq B^{p}\quad(0\leq p\leq1).
\end{equation}
In general \eqref{2} is not true for $p>1$.
Let $\Phi$ be a unital positive linear map. The following inequality is known as the Choi inequality see \cite{Ch,Fu}.
\begin{align}\label{10}
\Phi(A)^{-1}\leq \Phi(A^{-1}).
\end{align}
Ando \cite{And} proved that if $\Phi$ is a positive linear map, then for positive  definite matrices $A, B\in \mathfrak{B}(\mathcal{H})$ we have
\begin{equation}\label{an}
\Phi(A\sharp B)\leq \Phi(A)\sharp \Phi(B).
\end{equation}
A reverse of the Ando's inequality \eqref{an} is as follows: If $A, B\in \mathcal M_{n}$ and $0<m\leq A,B\leq M$, Then
\begin{align*}
\Phi(A)\sharp \Phi(B)\leq \frac{M+m}{2\sqrt{Mm}}\Phi(A\sharp B).
\end{align*}
By inequality \eqref{2} we get
\begin{align}\label{31}
(\Phi(A)\sharp \Phi(B))^{p}\leq \Big(\frac{M+m}{2\sqrt{Mm}}\Big)^{p}\Phi^{p}(A\sharp B),\, \, \, (0<p\leq 1).
\end{align}
Marshal and Olkin \cite{Mar} proved that a counterpart of Choi's inequality \eqref{10} as follows
\begin{align}\label{16}
\Phi(A^{-1})\leq\frac{(M+m)^{2}}{4Mm}\Phi(A)^{-1}
\end{align}
for positive definite $A$ with $0<m\leq A\leq M$. In addition  Lin \cite{lin} and Fu \cite{fu} improved inequality \eqref{16} for $p\geq2$.\\
The matrix power means satisfy the following inequality: for each $t\in (0, 1]$
\begin{align}\label{15}
\Phi (P_{t}(\omega; {\mathbb A}))\leq P_{t}(\omega; \Phi({\mathbb A})),
\end{align}
where ${\mathbb A}=(A_{1},\cdots,A_{n})$ be a $n$-tuple of positive definite matrices and $\Phi({\mathbb A})=(\Phi(A_{1}),\\\cdots
,\Phi(A_{n}))$.\\
Dehghani et al. \cite{deh} established counterparts of \eqref{15} involving matrix power means as following:
\begin{align*}
P_{t}^{2}(\omega; \Phi({\mathbb A}))\leq\Big(\frac{(m+M)^{2}}{4mM}\Big)^{2}\Phi^{2}(P_{t}(\omega; {\mathbb A}))
\end{align*}
for all $t\in [-1, 1]\backslash\{0\}$ and $0<m\leq A_{i}\leq M$.\\
Using inequality \eqref{2} we get
\begin{align}\label{3}
P_{t}^{p}(\omega; \Phi({\mathbb A}))\leq\Big(\frac{(m+M)^{2}}{4mM}\Big)^{p}\Phi^{p}(P_{t}(\omega; {\mathbb A})),\, \, \, (0<p\leq 2)
\end{align}
It is interesting to ask whenever the inequality \eqref{3} is true for $p\geq2$. This is the first motivation of this paper. moreover, we improve inequality \eqref{31} for $p\geq2$. We also obtain some reverses of \eqref{11}. In the last section, we establish several refinements of obtained inequalities.

\section{Main results}
\bigskip To prove our first result, we need the following lemmas.
\begin{lemma}\cite{Bha,Ando,Ba,Fuji}\label{6}
Let $A, B\in\mathcal{M}_n$ be positive  definite matrices  and $\alpha>0$. Then \\
$\rm{(i)}\,\,||AB||\leq\frac{1}{4}||A+B||^{2}.$\\
$\rm{(ii)}\,\,||A^{\alpha}+B^{\alpha}||\leq ||(A+B)^{\alpha}||.$\\
$\rm{(iii)}\,\, A\leq\alpha B$ if and only if $||A^{\frac{1}{2}}B^{-\frac{1}{2}}||\leq \alpha^{\frac{1}{2}}.$\\
$\rm{(iv)}\,$ If $0\leq A\leq B$ and $0<m\leq A\leq M$, then $A^{2}\leq \frac{(M+m)^{2}}{4Mm}B^{2}.$
\end{lemma}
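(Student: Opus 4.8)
The four assertions are standard spectral-norm inequalities (throughout, $\|\cdot\|$ is the operator norm), and I would treat them in increasing order of independence from the cited literature. Parts (i) and (ii) are matrix arithmetic--geometric mean type facts that I would quote from \cite{Bha,Ando,Ba,Fuji}, whereas (iii) and (iv) admit short self-contained proofs using only the functional calculus and the order relation. One preliminary remark: (ii) genuinely requires $\alpha\ge1$ (for $0<\alpha<1$ the subadditivity of $t\mapsto t^{\alpha}$ reverses the inequality, as $A=B=I$ already shows), so I read the blanket hypothesis $\alpha>0$ as $\alpha\ge1$ in that item.

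For (iii) I would argue from $\|T\|^{2}=\|T^{*}T\|$. Taking $T=A^{1/2}B^{-1/2}$ gives $T^{*}T=B^{-1/2}AB^{-1/2}$, which is positive definite, so $\|A^{1/2}B^{-1/2}\|\le\alpha^{1/2}$ is equivalent to $\|B^{-1/2}AB^{-1/2}\|\le\alpha$. For a positive operator the norm does not exceed $\alpha$ exactly when the operator is $\le\alpha I$, so this is equivalent to $B^{-1/2}AB^{-1/2}\le\alpha I$; conjugating by $B^{1/2}$ (a congruence, hence order preserving) this is precisely $A\le\alpha B$. Every step is reversible, giving the stated equivalence.

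For (iv) I would first reduce to (iii). Applying the equivalence in (iii) to the positive definite pair $(A^{2},B^{2})$ shows that the claim $A^{2}\le\frac{(M+m)^{2}}{4Mm}B^{2}$ is equivalent to $\|AB^{-1}\|\le\frac{M+m}{2\sqrt{Mm}}$, i.e.\ to $B^{-1}A^{2}B^{-1}\le\frac{(M+m)^{2}}{4Mm}I$. To prove the latter I would start from $(M-t)(t-m)\ge0$ on $[m,M]$, which by the functional calculus gives $A^{2}\le(M+m)A-MmI$. Conjugating by $B^{-1}$ yields $B^{-1}A^{2}B^{-1}\le(M+m)B^{-1}AB^{-1}-MmB^{-2}$, and since $A\le B$ implies $B^{-1}AB^{-1}\le B^{-1}$, I obtain $B^{-1}A^{2}B^{-1}\le(M+m)B^{-1}-MmB^{-2}=g(B^{-1})$, where $g(s)=(M+m)s-Mms^{2}$. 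This downward parabola attains its maximum $\frac{(M+m)^{2}}{4Mm}$ at $s=\frac{M+m}{2Mm}$, so $g(B^{-1})\le\frac{(M+m)^{2}}{4Mm}I$ for every positive $B^{-1}$; conjugating back by $B$ gives the result.

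For (ii) (with $\alpha\ge1$) I would normalize by homogeneity so that $\|A+B\|=1$, i.e.\ $A+B\le I$; then $0\le A\le I$ and $0\le B\le I$, and since $t^{\alpha}\le t$ on $[0,1]$ the functional calculus gives $A^{\alpha}\le A$ and $B^{\alpha}\le B$, whence $A^{\alpha}+B^{\alpha}\le A+B\le I$ and $\|A^{\alpha}+B^{\alpha}\|\le1=\|(A+B)^{\alpha}\|$. The one genuine obstacle is (i): the sharp constant $\frac14$ is \emph{not} reachable by elementary submultiplicative manipulations. Indeed, after normalizing $\|A+B\|=2$, combining $A^{2}\le2A$ and $B^{2}\le2B$ with $\|A^{1/2}B^{1/2}\|\le\frac12\|A+B\|$ only yields $\|AB\|\le\frac12\|A+B\|^{2}$, which is off by a factor of two; the optimal bound $\|AB\|\le\frac14\|A+B\|^{2}$ (with equality at $A=B$) is the Bhatia--Kittaneh singular value arithmetic--geometric mean inequality, which I would invoke directly from \cite{Bha}.
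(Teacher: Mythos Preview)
The paper does not prove this lemma at all; it is stated with citation to \cite{Bha,Ando,Ba,Fuji} and used as a toolbox throughout Section~2. So there is no ``paper's proof'' to compare against, and your proposal actually goes well beyond what the paper offers.

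Your self-contained arguments for (iii) and (iv) are correct. In (iv) the final phrase ``conjugating back by $B$'' is redundant once you have established $B^{-1}A^{2}B^{-1}\le\frac{(M+m)^{2}}{4Mm}I$, since that inequality is already equivalent (via (iii)) to the claim; but conjugating by $B$ does also literally recover $A^{2}\le\frac{(M+m)^{2}}{4Mm}B^{2}$, so no harm done. Your normalization proof of (ii) for $\alpha\ge1$ is clean, and your remark that (ii) fails for $0<\alpha<1$ is accurate and worth flagging: the paper only ever invokes (ii) with exponent $p/2\ge1$ (see the proofs of Theorems~\ref{p}, \ref{th}, \ref{ref}, \ref{ref2}), so the implicit restriction $\alpha\ge1$ is harmless in context, but the lemma as literally stated is indeed too strong. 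Your treatment of (i) --- recognizing that submultiplicativity alone yields only the constant $\tfrac12$ and that the sharp $\tfrac14$ is the Bhatia--Kittaneh arithmetic--geometric mean inequality --- is exactly right and matches the paper's choice to cite \cite{Bha}.
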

\begin{lemma}\cite{Ho}\label{19}
Let $A\in\mathcal{M}_n$ be positive definite. Then $A\leq tI$ if and only if $\|A\|\leq t$ if and only if $\left[\begin{array}{cc}
 tI&A\\
 A^{*}&tI
 \end{array}\right]$ is positive.
 \end{lemma}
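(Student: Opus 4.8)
The plan is to prove the three conditions equivalent by splitting off the two biconditionals. Throughout I use that a positive definite matrix $A$ is Hermitian, so that $A^{*}=A$ (hence the off-diagonal blocks coincide) and, because $A$ is positive semidefinite, its operator norm equals the largest eigenvalue, $\|A\|=\lambda_{\max}(A)$.

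The equivalence $A\leq tI\Leftrightarrow\|A\|\leq t$ is the easy link. Writing the spectral decomposition of $A$ with nonnegative eigenvalues $\lambda_{j}$ and orthonormal eigenvectors $u_{j}$, the inequality $A\leq tI$ reads $\langle(tI-A)x,x\rangle=\sum_{j}(t-\lambda_{j})|\langle x,u_{j}\rangle|^{2}\geq0$ for every $x\in\mathbb{C}^{n}$, which holds precisely when $t\geq\lambda_{j}$ for all $j$, that is, when $t\geq\lambda_{\max}(A)=\|A\|$. No obstacle arises here.

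For the remaining equivalence $\|A\|\leq t\Leftrightarrow N\geq0$, where $N:=\left[\begin{smallmatrix} tI & A\\ A^{*} & tI\end{smallmatrix}\right]$, I would work directly with the quadratic form. Expanding on an arbitrary vector $\left[\begin{smallmatrix}x\\y\end{smallmatrix}\right]\in\mathbb{C}^{2n}$ and using $\langle A^{*}x,y\rangle=\langle x,Ay\rangle=\overline{\langle Ay,x\rangle}$ gives
\[
\Big\langle N\begin{bmatrix}x\\y\end{bmatrix},\begin{bmatrix}x\\y\end{bmatrix}\Big\rangle=t\|x\|^{2}+t\|y\|^{2}+2\,\mathrm{Re}\,\langle Ay,x\rangle .
\]
Assuming $\|A\|\leq t$, I would bound $2\,\mathrm{Re}\,\langle Ay,x\rangle\geq-2|\langle Ay,x\rangle|\geq-2\|A\|\,\|x\|\,\|y\|\geq-t(\|x\|^{2}+\|y\|^{2})$, invoking Cauchy--Schwarz together with $\|Ay\|\leq\|A\|\|y\|$ and then the elementary inequality $2\|x\|\|y\|\leq\|x\|^{2}+\|y\|^{2}$; this makes the form nonnegative, so $N\geq0$.

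For the converse I would insert a test vector that isolates the top of the spectrum: taking a unit eigenvector $v$ with $Av=\|A\|\,v$ and setting $x=-v$, $y=v$, the quadratic form collapses to $2t-2\|A\|$, and $N\geq0$ forces $\|A\|\leq t$. The only step demanding a moment of care is this choice of test vector, which must pick out exactly $\lambda_{\max}(A)$; since $A$ is Hermitian such an eigenvector exists, so the argument goes through. I would add the remark that $A>0$ makes every admissible $t$ positive (indeed $t\geq\lambda_{\max}(A)>0$), so one may alternatively obtain $\|A\|\leq t\Leftrightarrow N\geq0$ from the Schur complement $tI-t^{-1}A^{*}A$ of the positive block $tI$, whose nonnegativity is the same as $A^{*}A\leq t^{2}I$, i.e. $\|A\|\leq t$.
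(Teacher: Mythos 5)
Your proposal is correct. Note, however, that the paper itself offers no proof of this lemma: it is quoted verbatim from Horn and Johnson \cite{Ho}, so there is no in-paper argument to compare against. Your self-contained verification --- the spectral-decomposition link between $A\leq tI$ and $\|A\|=\lambda_{\max}(A)\leq t$ (valid because $A\geq 0$), the direct expansion of the quadratic form of the block matrix with Cauchy--Schwarz and $2\|x\|\|y\|\leq\|x\|^{2}+\|y\|^{2}$ in one direction, and the test vector $(-v,v)$ built from a top eigenvector in the other --- is exactly the standard textbook argument, and your closing remark via the Schur complement $tI-t^{-1}A^{*}A$ (legitimate since $t\geq\lambda_{\max}(A)>0$) is the other classical route to the same equivalence; either one fully discharges the claim the paper takes on faith.
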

\begin{theorem}\label{p}
Let ${\mathbb A}=(A_{1},\cdots,A_{n})$ be a $n$-tuple of positive definite matrices with $0<m\leq A_{i}\leq M,\ (i=1,\cdots,n)$ for some scalars $m< M$ and $\omega=(w_{1},\cdots,w_{n})$ a weight vector. If $\Phi$ is a unital positive linear map, then
\begin{align}\label{4}
P_{t}^{p}(\omega; \Phi({\mathbb A}))\leq\Big(\frac{(m+M)^{2}}{4^{\frac{2}{p}}mM}\Big)^{p}\Phi^{p}(P_{t}(\omega; {\mathbb A}))
\end{align}
for every $p\geq2$ and $t\in [-1, 1]\backslash\{0\}$.
\begin{proof}
By Lemma \ref{6}(iii), inequality \eqref{4} is equivalent to
\begin{align}\label{8}
\Big\|P_{t}^{\frac{p}{2}}(\omega; \Phi({\mathbb A}))\Phi^{-\frac{p}{2}}(P_{t}(\omega; {\mathbb A}))\Big\|\leq \frac{(m+M)^{p}}{4M^{\frac{p}{2}}m^{\frac{p}{2}}}.
\end{align}
Hence, it is enough to prove inequality \eqref{8}. So
\begin{align}\label{haj1}
M^{\frac{p}{2}}m^{\frac{p}{2}}\Big\|P_{t}^{\frac{p}{2}}(\omega; \Phi({\mathbb A}))\Phi^{-\frac{p}{2}}(P_{t}(\omega; {\mathbb A}))\Big\|\nonumber&=
\Big\|P_{t}^{\frac{p}{2}}(\omega; \Phi({\mathbb A}))M^{\frac{p}{2}}m^{\frac{p}{2}}\Phi^{-\frac{p}{2}}(P_{t}(\omega; {\mathbb A}))\Big\|\nonumber\\&
\leq \frac{1}{4}\Big\|P_{t}^{\frac{p}{2}}(\omega; \Phi({\mathbb A}))+M^{\frac{p}{2}}m^{\frac{p}{2}}\Phi^{-\frac{p}{2}}(P_{t}(\omega; {\mathbb A}))\Big\|^{2}\nonumber\\&
   \qquad   \qquad   \qquad   \qquad   \qquad (\textrm{by Lemma \ref{6}(i)})\nonumber\\&
\leq \frac{1}{4}\Big\|(P_{t}(\omega; \Phi({\mathbb A}))+Mm\Phi^{-1}(P_{t}(\omega; {\mathbb A})))^{\frac{p}{2}}\Big\|^{2}\nonumber\\&
  \qquad   \qquad   \qquad  \qquad   \qquad (\textrm{by Lemma \ref{6}(ii)})\nonumber\\&
=\frac{1}{4}\|(P_{t}(\omega; \Phi({\mathbb A}))+Mm\Phi^{-1}(P_{t}(\omega; {\mathbb A})))\|^{p}\nonumber\\&
\leq \frac{1}{4}\left\|\sum_{i=1}^{n}w_{i}\Phi(A_{i})+Mm\Phi (P_{t}(\omega; {\mathbb A})^{-1})\right\|^{p}\nonumber\\&
  \qquad   \qquad   \qquad  \qquad   \qquad   (\textrm{by \eqref{10}})\nonumber\\&
\leq \frac{1}{4}\left\|\sum_{i=1}^{n}w_{i}\Phi(A_{i})+Mm\Phi (\sum_{i=1}^{n}w_{i}A_{i}^{-1})\right\|^{p}\nonumber\\&
  \qquad   \qquad   \qquad  \qquad    \qquad (\textrm{by \eqref{11}})\nonumber\\&
=\frac{1}{4}\left\|\sum_{i=1}^{n}w_{i}\Big(\Phi(A_{i})+Mm\Phi(A_{i}^{-1})\Big)\right\|^{p}.
\end{align}
It follows from $0<m\leq A_i\leq M$ that $(M-A_i)(m-A_i)A_i^{-1}\leq0\,\,(i=1, 2,\cdots, n)$.
Hence
\begin{align}\label{haj2}
Mm\Phi(A_i^{-1})+\Phi(A_i)\leq M+m\qquad(i=1, 2,\cdots, n)
\end{align}
Using inequalities \eqref{haj1} and \eqref{haj2} we get
\begin{align*}
||P_{t}^{\frac{p}{2}}(\omega; \Phi({\mathbb A}))\Phi^{-\frac{p}{2}}(P_{t}(\omega; {\mathbb A}))||\leq \frac{(m+M)^{p}}{4M^{\frac{p}{2}}m^{\frac{p}{2}}}.
\end{align*}
Thus, this completes the proof.
\end{proof}
\end{theorem}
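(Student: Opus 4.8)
The plan is to invoke Lemma \ref{6}(iii) to convert the desired operator inequality \eqref{4} into an equivalent norm bound, namely
\begin{align*}
\Big\|P_{t}^{\frac{p}{2}}(\omega; \Phi({\mathbb A}))\Phi^{-\frac{p}{2}}(P_{t}(\omega; {\mathbb A}))\Big\|\leq \frac{(m+M)^{p}}{4M^{\frac{p}{2}}m^{\frac{p}{2}}},
\end{align*}
since with $\alpha=\big(\frac{(m+M)^{2}}{4^{2/p}mM}\big)^{p}$ one checks that $\alpha^{1/2}$ equals the right-hand side above. Passing to norms rather than staying in the operator order is essential: for $p\geq 2$ the exponent lies outside the range where the operator monotonicity \eqref{2} applies, so the direct route that yielded \eqref{3} for $0<p\leq 2$ breaks down. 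Overcoming this is the real point of the argument.

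First I would absorb the scalar $M^{p/2}m^{p/2}$ into the second factor and apply the arithmetic--geometric bound of Lemma \ref{6}(i), $\|XY\|\leq\frac14\|X+Y\|^2$, with $X=P_{t}^{p/2}(\omega;\Phi({\mathbb A}))$ and $Y=M^{p/2}m^{p/2}\Phi^{-p/2}(P_{t}(\omega;{\mathbb A}))$. The key move is then Lemma \ref{6}(ii): writing $X=\big(P_{t}(\omega;\Phi({\mathbb A}))\big)^{p/2}$ and $Y=\big(Mm\,\Phi^{-1}(P_{t}(\omega;{\mathbb A}))\big)^{p/2}$, the inequality $\|A^{p/2}+B^{p/2}\|\leq\|(A+B)^{p/2}\|$ collapses the two $p/2$-powers into a single power of the sum. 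This is exactly where the hypothesis $p\geq 2$ enters, because Lemma \ref{6}(ii) requires the exponent $p/2\geq 1$; it is the genuinely load-bearing step. Positivity of the argument then lets me replace $\|(A+B)^{p/2}\|^2$ by $\|A+B\|^{p}$, reducing everything to estimating $\|P_{t}(\omega;\Phi({\mathbb A}))+Mm\,\Phi^{-1}(P_{t}(\omega;{\mathbb A}))\|$.

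To bound that sum I would chain the available operator inequalities. The upper half of the interpolation bound \eqref{11} applied to $\Phi({\mathbb A})$ gives $P_{t}(\omega;\Phi({\mathbb A}))\leq\sum_i w_i\Phi(A_i)$; Choi's inequality \eqref{10} gives $\Phi^{-1}(P_{t}(\omega;{\mathbb A}))\leq\Phi\big(P_{t}(\omega;{\mathbb A})^{-1}\big)$; and the lower half of \eqref{11}, in the form $P_{t}(\omega;{\mathbb A})^{-1}\leq\sum_i w_iA_i^{-1}$, together with positivity of $\Phi$, yields $\Phi\big(P_{t}(\omega;{\mathbb A})^{-1}\big)\leq\sum_i w_i\Phi(A_i^{-1})$. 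Combining these and collecting terms gives
\begin{align*}
P_{t}(\omega;\Phi({\mathbb A}))+Mm\,\Phi^{-1}(P_{t}(\omega;{\mathbb A}))\leq\sum_{i=1}^{n} w_i\big(\Phi(A_i)+Mm\,\Phi(A_i^{-1})\big).
\end{align*}

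Finally, the scalar estimate comes from the operator inequality $(M-A_i)(m-A_i)A_i^{-1}\leq 0$, valid because $m\leq A_i\leq M$; expanding it gives $A_i+Mm\,A_i^{-1}\leq M+m$, and applying the unital positive map $\Phi$ preserves this, so each summand is $\leq(M+m)I$ and hence so is the weighted average. Thus the sum above has norm at most $M+m$, and feeding this back through the chain produces the factor $\frac14(M+m)^p$ once the $M^{p/2}m^{p/2}$ introduced at the start is accounted for; dividing out delivers the required norm bound. I expect the only routine care to be in tracking the exponents $p/2$ versus $p$ and in justifying the passage $\|(A+B)^{p/2}\|^2=\|A+B\|^{p}$, while the decisive ingredient remains Lemma \ref{6}(ii), which is what extends the result to the previously open range $p\geq 2$.
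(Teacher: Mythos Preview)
Your proposal is correct and follows essentially the same route as the paper's proof: reduce \eqref{4} to the norm bound via Lemma~\ref{6}(iii), apply Lemma~\ref{6}(i) and then Lemma~\ref{6}(ii) to collapse the $p/2$-powers, and finish with \eqref{11}, the Choi inequality \eqref{10}, and the scalar bound $A_i+MmA_i^{-1}\le M+m$ coming from $(M-A_i)(m-A_i)A_i^{-1}\le 0$. Your explicit remark that Lemma~\ref{6}(ii) needs the exponent $p/2\ge 1$---hence $p\ge 2$---is a useful clarification that the paper's proof leaves implicit.
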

In the following result we state that inequality \eqref{31} is valid for any $p\geq2$.
\begin{corollary}\label{30}
Let $A, B\in \mathcal M(\mathbb C)$ be positive definite matrices such that $0<m\leq A,B\leq M$ for some scalars $m< M$ and $\alpha\in[0,1]$. Then
\begin{align*}
(\Phi(A)\sharp_{\alpha}\Phi(B))^{p}\leq \Big(\frac{(m+M)^{2}}{4^{\frac{2}{p}}mM}\Big)^{p}\Phi^{p}(A\sharp_{\alpha} B),
\end{align*}
for any $p\geq2$ and unital positive linear map $\Phi$.
\end{corollary}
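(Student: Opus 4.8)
The plan is to transcribe the proof of Theorem~\ref{p} almost verbatim, with the weighted geometric mean $A\sharp_\alpha B$ playing the role of the power mean $P_t(\omega;\mathbb{A})$. First I would apply Lemma~\ref{6}(iii) to rewrite the asserted inequality in the equivalent form
\begin{align*}
\Big\|\big(\Phi(A)\sharp_\alpha\Phi(B)\big)^{\frac p2}\,\Phi^{-\frac p2}(A\sharp_\alpha B)\Big\|\leq\frac{(M+m)^{p}}{4M^{\frac p2}m^{\frac p2}};
\end{align*}
indeed, squaring the right-hand side produces exactly $\big(\tfrac{(m+M)^2}{4^{2/p}mM}\big)^p$, the constant in the statement.

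Next I would multiply the left side by $M^{p/2}m^{p/2}$ and run the same three-step estimate used in Theorem~\ref{p}: Lemma~\ref{6}(i) bounds the product by one quarter of the square of the sum $\big\|(\Phi(A)\sharp_\alpha\Phi(B))^{p/2}+M^{p/2}m^{p/2}\Phi^{-p/2}(A\sharp_\alpha B)\big\|^2$; Lemma~\ref{6}(ii) absorbs the two $\tfrac p2$-th powers into a single $\tfrac p2$-th power of the sum $\Phi(A)\sharp_\alpha\Phi(B)+Mm\,\Phi^{-1}(A\sharp_\alpha B)$; and the identity $\|C^{p/2}\|^2=\|C\|^p$ for positive $C$ collapses this into $\tfrac14\big\|\Phi(A)\sharp_\alpha\Phi(B)+Mm\,\Phi^{-1}(A\sharp_\alpha B)\big\|^p$. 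It then remains to show that the operator inside this norm is dominated by $(M+m)I$.

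This last operator estimate is the crux. Here I would use, in order: the weighted arithmetic--geometric mean inequality $\Phi(A)\sharp_\alpha\Phi(B)\leq(1-\alpha)\Phi(A)+\alpha\Phi(B)$; the Choi inequality~\eqref{10}, $\Phi^{-1}(A\sharp_\alpha B)\leq\Phi\big((A\sharp_\alpha B)^{-1}\big)$; and the harmonic--geometric mean inequality $(A\sharp_\alpha B)^{-1}\leq(1-\alpha)A^{-1}+\alpha B^{-1}$ (that is, $A!_\alpha B\leq A\sharp_\alpha B$), pushed through the positive linear map $\Phi$. Combining these yields
\begin{align*}
\Phi(A)\sharp_\alpha\Phi(B)+Mm\,\Phi^{-1}(A\sharp_\alpha B)\leq(1-\alpha)\big(\Phi(A)+Mm\,\Phi(A^{-1})\big)+\alpha\big(\Phi(B)+Mm\,\Phi(B^{-1})\big),
\end{align*}
and since each bracket is at most $M+m$ by \eqref{haj2} while $(1-\alpha)+\alpha=1$, the right side is $\leq(M+m)I$. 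Taking the norm and the $p$-th power then delivers the bound $\tfrac14(M+m)^p$ required by the reduction in the first paragraph.

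The only genuine difficulty is bookkeeping: one must check that each mean inequality is applied in the correct weighted direction and that $\Phi$ is inserted legitimately at each stage. A shorter but less self-contained alternative would be to observe that the two-variable weighted geometric mean coincides with the Karcher mean, $A\sharp_\alpha B=\Lambda((1-\alpha,\alpha);(A,B))$, so that the corollary is exactly the $n=2$, $\omega=(1-\alpha,\alpha)$ instance of Theorem~\ref{p} obtained by letting $t\to0$ and invoking the continuity~\eqref{14} together with continuity of $X\mapsto X^p$.
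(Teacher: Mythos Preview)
Your proof is correct but considerably more elaborate than the paper's. The paper dispatches the corollary in one sentence: it specializes Theorem~\ref{p} to $n=2$ with $\omega=(1-\alpha,\alpha)$, invoking the identification $P_t((1-\alpha,\alpha);(A,B))=A\sharp_\alpha B$, so that inequality~\eqref{4} becomes exactly the statement of the corollary. Your main argument instead re-runs the whole proof of Theorem~\ref{p} with $A\sharp_\alpha B$ substituted for $P_t(\omega;\mathbb A)$; this is valid and has the merit of making explicit which mean inequalities (weighted AM--GM, weighted HM--GM, and Choi's inequality~\eqref{10}) actually drive the estimate, but it duplicates work already done. Your brief alternative at the end---recognizing $A\sharp_\alpha B$ as the two-point Karcher mean $\Lambda((1-\alpha,\alpha);(A,B))$ and passing to the limit $t\to0$ via~\eqref{14}---is essentially the paper's one-line reduction, and arguably on firmer footing, since the two-point power mean $P_t((1-\alpha,\alpha);(A,B))$ does not in general coincide with $A\sharp_\alpha B$ for arbitrary $t$.
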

\begin{proof}
Using this fact $P_{t}(1-\alpha,\alpha;A,B)=A\sharp_{\alpha}B,\ (\alpha\in [0,1])$ and $n=2, w_{1}=1-\alpha$ and $w_{2}=\alpha$ in inequality\eqref{4}, we get the desired result.
\end{proof}

\begin{corollary}
Let ${\mathbb A}=(A_{1},\cdots,A_{n})$ be a n-tuple of positive definite matrices with $0<m\leq A_{i}\leq M,\ (i=1,\cdots,n)$ for some scalars $m< M$ and $\omega=(w_{1},\cdots,w_{n})$ a weight vector. If $\Phi$ is a unital positive linear map, then
\begin{align*}
\Lambda^{p}(\omega; \Phi({\mathbb A}))\leq\Big(\frac{(m+M)^{2}}{4^{\frac{2}{p}}mM}\Big)^{p}\Phi^{p}(\Lambda(\omega; {\mathbb A}))
\end{align*}
for every $p\geq2$ and $t\in [-1, 1]\backslash\{0\}$.
\begin{proof}
The proof follows from Theorem \ref{p} and relation \eqref{14}.
\end{proof}
\end{corollary}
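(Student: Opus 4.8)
The plan is to obtain the Karcher-mean inequality as the $t\to 0$ limit of the power-mean inequality \eqref{4} of Theorem \ref{p}. The decisive feature is that the constant $\big(\frac{(m+M)^{2}}{4^{2/p}mM}\big)^{p}$ on the right of \eqref{4} depends only on $m$, $M$ and $p$, and in particular is \emph{independent of $t$}. Thus the inequality
\[
P_{t}^{p}(\omega; \Phi({\mathbb A}))\leq\Big(\frac{(m+M)^{2}}{4^{\frac{2}{p}}mM}\Big)^{p}\Phi^{p}(P_{t}(\omega; {\mathbb A}))
\]
holds with one and the same constant for every $t\in[-1,1]\backslash\{0\}$, so we may safely let $t\to 0$ without the right-hand side changing.

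First I would verify that Theorem \ref{p} applies to both tuples that occur. Since $\Phi$ is unital and positive, the hypothesis $0<m\leq A_{i}\leq M$ gives $mI=m\Phi(I)\leq\Phi(A_{i})\leq M\Phi(I)=MI$, so $\Phi({\mathbb A})=(\Phi(A_{1}),\cdots,\Phi(A_{n}))$ is again a tuple of positive definite matrices with spectra in $[m,M]$. Consequently both $P_{t}(\omega;{\mathbb A})$ and $P_{t}(\omega;\Phi({\mathbb A}))$ are well defined, and by \eqref{11} each of them lies between $mI$ and $MI$; this traps them in a fixed compact set, bounded away from the boundary of the positive cone uniformly in $t$.

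Next I would pass to the limit. Applying relation \eqref{14} to ${\mathbb A}$ and to $\Phi({\mathbb A})$ gives
\[
\lim_{t\to 0}P_{t}(\omega;\Phi({\mathbb A}))=\Lambda(\omega;\Phi({\mathbb A})),\qquad \lim_{t\to 0}P_{t}(\omega;{\mathbb A})=\Lambda(\omega;{\mathbb A}).
\]
Because the matrices remain in the compact region between $mI$ and $MI$, the continuity of the power map $X\mapsto X^{p}$ on the positive definite cone, together with the continuity of the (finite-dimensional, hence bounded) linear map $\Phi$, yields $P_{t}^{p}(\omega;\Phi({\mathbb A}))\to\Lambda^{p}(\omega;\Phi({\mathbb A}))$ and $\Phi^{p}(P_{t}(\omega;{\mathbb A}))\to\Phi^{p}(\Lambda(\omega;{\mathbb A}))$. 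Taking the limit in the uniform inequality and using that the positive semidefinite cone is closed, so that the order $\leq$ is preserved under limits, produces exactly
\[
\Lambda^{p}(\omega; \Phi({\mathbb A}))\leq\Big(\frac{(m+M)^{2}}{4^{\frac{2}{p}}mM}\Big)^{p}\Phi^{p}(\Lambda(\omega; {\mathbb A})),
\]
which is the claim.

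The step I expect to demand the most care is the justification of moving the limit inside: one must know that $t\mapsto P_{t}(\omega;{\mathbb A})$ actually converges as $t\to 0$, which is precisely the content of \eqref{14}, and that $X\mapsto X^{p}$ and $\Phi^{p}$ are continuous on the compact order interval so the limits commute with these operations. Once these continuity facts and the closedness of the cone are secured, the conclusion is immediate, since the full quantitative strength is inherited verbatim from Theorem \ref{p} and no additional estimation is required.
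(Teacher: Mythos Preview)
Your proposal is correct and follows exactly the approach indicated in the paper: apply Theorem \ref{p} and pass to the limit $t\to 0$ using \eqref{14}. The paper states this in a single line, whereas you have spelled out the continuity and closedness details needed to make the limit argument rigorous.
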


\begin{theorem}
Let ${\mathbb A}=(A_{1},\cdots,A_{n})$ be a n-tuple of positive definite matrices such that $0<m\leq A_{i}\leq M,\ (i=1,\cdots,n)$ for some scalars $m< M$ and $\omega=(w_{1},\cdots,w_{n})$ a weight vector. Then
\begin{align}\label{12}
\sum_{i=1}^{n}w_{i}A_{i}\leq \frac{(M+m)^{2}}{4Mm}P_{t}(\omega; {\mathbb A}),
\end{align}
where $t\in [-1, 1]\backslash\{0\}$.
\end{theorem}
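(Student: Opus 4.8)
The plan is to bound the weighted arithmetic mean by a constant multiple of the weighted harmonic mean (an operator Kantorovich inequality) and then invoke the left-hand inequality of \eqref{11}, which already places the harmonic mean below $P_{t}(\omega;\mathbb{A})$. Writing $S=\sum_{i=1}^{n}w_{i}A_{i}$ and $H^{-1}=\sum_{i=1}^{n}w_{i}A_{i}^{-1}$, it suffices to prove
\[
S\le \frac{(M+m)^{2}}{4Mm}\,H,
\]
because then $S\le \frac{(M+m)^{2}}{4Mm}H\le \frac{(M+m)^{2}}{4Mm}P_{t}(\omega;\mathbb{A})$, the last step using \eqref{11} and the positivity of the scalar factor.

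First I would reuse the elementary observation already exploited for \eqref{haj2}: from $0<m\le A_{i}\le M$ one has $(M-A_{i})(m-A_{i})A_{i}^{-1}\le 0$, and since the three factors are commuting functions of $A_{i}$ this gives $A_{i}+MmA_{i}^{-1}\le (M+m)I$ for each $i$. Averaging with the weights $w_{i}$ yields the single operator inequality
\[
S+MmH^{-1}\le (M+m)I .
\]
The second ingredient is a scalar AM--GM estimate transported through the functional calculus of $H$. Since $H$ is positive definite, applying the pointwise bound $\frac{(M+m)^{2}}{4Mm}\,h+Mm\,h^{-1}\ge (M+m)$, valid for every $h>0$ by AM--GM, to the spectrum of $H$ produces
\[
\frac{(M+m)^{2}}{4Mm}\,H+MmH^{-1}\ge (M+m)I .
\]
Chaining the last two displays and cancelling the common term $MmH^{-1}$ (legitimate in the Loewner order, as the same operator is subtracted from both sides) gives exactly $S\le \frac{(M+m)^{2}}{4Mm}H$, and the theorem follows.

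The only genuine obstacle here is conceptual rather than computational: one must recognize that the reverse of \eqref{11} actually needed is the operator Kantorovich inequality between the arithmetic and harmonic means, after which the left inequality of \eqref{11} supplies the rest for free. Everything else is routine, the two operator inequalities above being arranged precisely so that the harmonic term cancels. An alternative and equally short route is to apply the inequality \eqref{16} of Marshall and Olkin to the block-diagonal matrix $\mathbb{A}=\mathrm{diag}(A_{1},\dots,A_{n})$ together with the positive unital map $\Phi(X)=\sum_{i=1}^{n}w_{i}X_{ii}$, which delivers $H^{-1}\le \frac{(M+m)^{2}}{4Mm}S^{-1}$ directly; inverting this (an order-reversing operation) again produces $S\le \frac{(M+m)^{2}}{4Mm}H$, and \eqref{11} finishes the argument as before.
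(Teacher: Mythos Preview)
Your proof is correct, and in fact your ``alternative route'' is exactly the paper's argument: the authors apply the Marshall--Olkin inequality \eqref{16} with the unital positive map $\Phi(A)=\sum_{i}w_{i}A_{i}$ to obtain $\sum_{i}w_{i}A_{i}\le\frac{(M+m)^{2}}{4Mm}\bigl(\sum_{i}w_{i}A_{i}^{-1}\bigr)^{-1}$, and then invoke the left-hand side of \eqref{11}.

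Your main route is a minor but pleasant variation: instead of quoting \eqref{16} as a black box, you rederive the needed Kantorovich-type bound $S\le\frac{(M+m)^{2}}{4Mm}H$ by combining the spectral estimate $A_{i}+MmA_{i}^{-1}\le(M+m)I$ (averaged over $i$) with the scalar AM--GM inequality applied to the spectrum of $H$, arranging the two displays so that the $MmH^{-1}$ term cancels. This makes the argument entirely self-contained and avoids the implicit inversion step hidden in the paper's one-line appeal to \eqref{16}, at the cost of a couple of extra lines. Both approaches rest on the same two ideas (Kantorovich between arithmetic and harmonic means, then \eqref{11}), so the difference is one of presentation rather than substance.
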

\begin{proof}
If we put $\Phi(A)=\sum_{i=1}^{n}w_{i}A_{i}$, then for $t\in (0, 1]$ we have
\begin{align*}
\sum_{i=1}^{n}w_{i}A_{i}=\Phi(A)&\leq\frac{(M+m)^{2}}{4Mm}\Big(\sum_{i=1}^{n}w_{i}A_{i}^{-1}\Big)^{-1}   \qquad   \qquad (\textrm{by \eqref{16}})\\&
\leq \frac{(M+m)^{2}}{4Mm} P_{t}(\omega; {\mathbb A}) \qquad \qquad    \qquad (\textrm{by \eqref{11}}).
\end{align*}
Therefore
\begin{align*}
\sum_{i=1}^{n}w_{i}A_{i}\leq \frac{(M+m)^{2}}{4Mm}P_{t}(\omega; {\mathbb A}).
\end{align*}
Inequality \eqref{12} follows from a similar fashion for $t\in [-1, 0)$.
\end{proof}
\begin{remark}
As special case for $\mathbb A=(A,B)$ and $\omega=(w_{1},w_{2})$ with $w_{1}=w_{2}=\frac{1}{2}$, we have the following inequality:
\begin{align*}
\frac{A+B}{2}\leq\frac{(M+m)^{2}}{4Mm}(A\sharp B),
\end{align*}
which is counterpart of AM-GM inequality \eqref{32}.
\end{remark}
\begin{corollary}\label{AM}
Let ${\mathbb A}=(A_{1},\cdots,A_{n})$ be a n-tuple of positive definite matrices with $0<m\leq A_{i}\leq M,\ (i=1,\cdots,n)$ for some scalars $m\leq M$ and $\omega=(w_{1},\cdots,w_{n})$ a weight vector. Then
\begin{align}\label{kar}
\sum_{i=1}^{n}w_{i}A_{i}\leq \frac{(M+m)^{2}}{4Mm}\Lambda(\omega; {\mathbb A}),
\end{align}
where  $t\in [-1, 1]\backslash\{0\}$.
\end{corollary}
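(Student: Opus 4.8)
The plan is to obtain \eqref{kar} as an immediate consequence of the inequality \eqref{12} just established for the matrix power mean, together with the convergence \eqref{14} of $P_{t}(\omega;{\mathbb A})$ to the Karcher mean. Since \eqref{12} holds for every $t\in[-1,1]\backslash\{0\}$ with the \emph{same} constant $\frac{(M+m)^{2}}{4Mm}$, independent of $t$, letting $t\to 0$ should transport the bound directly onto $\Lambda(\omega;{\mathbb A})$.

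Concretely, I would start from
\begin{align*}
\sum_{i=1}^{n}w_{i}A_{i}\leq \frac{(M+m)^{2}}{4Mm}\,P_{t}(\omega;{\mathbb A}),
\end{align*}
valid for each fixed $t\in(0,1]$ by \eqref{12}, and rewrite it as $\frac{(M+m)^{2}}{4Mm}P_{t}(\omega;{\mathbb A})-\sum_{i=1}^{n}w_{i}A_{i}\geq 0$. The left-hand side is a matrix whose only $t$-dependence sits in $P_{t}$, and by \eqref{14} it converges to $\frac{(M+m)^{2}}{4Mm}\Lambda(\omega;{\mathbb A})-\sum_{i=1}^{n}w_{i}A_{i}$ as $t\to 0^{+}$. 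Because the cone of positive semidefinite matrices is closed, a limit of positive semidefinite matrices is again positive semidefinite, so the non-strict inequality is preserved in the limit and \eqref{kar} follows; the range $t\in[-1,0)$ yields the same conclusion.

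An alternative, non-asymptotic route would bypass the limit entirely: combine the lower Karcher bound $\left(\sum_{i=1}^{n} w_{i} A_{i}^{-1}\right)^{-1}\leq \Lambda(\omega;{\mathbb A})$ from \eqref{karcher} with the Marshall--Olkin estimate \eqref{16}, applied—exactly as in the proof of \eqref{12}—to the unital positive linear map sending a tuple to its $\omega$-weighted sum, which gives $\sum_{i=1}^{n} w_{i} A_{i}\leq \frac{(M+m)^{2}}{4Mm}\left(\sum_{i=1}^{n} w_{i} A_{i}^{-1}\right)^{-1}$. Chaining these two estimates delivers \eqref{kar} directly.

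Since the substantive work—producing the constant $\frac{(M+m)^{2}}{4Mm}$ and verifying the operator inequality for $P_{t}$—was already carried out for the power mean, there is no genuine obstacle here. The only point requiring a word of justification is that passing to the limit $t\to 0$ does not degrade a non-strict operator inequality, which is precisely the closedness of the positive semidefinite cone. I would therefore present the limiting argument, resting on \eqref{12} and \eqref{14}, as the main proof and record the direct chain via \eqref{karcher} and \eqref{16} as a remark.
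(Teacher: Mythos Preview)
Your proposal is correct and matches the paper's intended approach: the paper states this corollary without proof, but the pattern it follows (cf.\ Corollary~2.4, where the proof reads ``follows from Theorem~\ref{p} and relation~\eqref{14}'') is precisely your main argument---pass to the limit $t\to 0$ in \eqref{12} using \eqref{14} and the closedness of the positive semidefinite cone. Your alternative direct route via \eqref{karcher} and \eqref{16} is also valid and, indeed, simply replays the proof of \eqref{12} with $\Lambda$ in place of $P_t$.
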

\begin{remark}
Inequalities \eqref{12} and \eqref{kar} can be regarded as a counterpart of inequalities \eqref{11} and \eqref{karcher}, respectively. By inequalities \eqref{12} and \eqref{15}, we can obtain the following operator inequality
\begin{align}\label{13}
\Phi\Big(\sum_{i=1}^{n}w_{i}A_{i}\Big)&\leq\frac{(M+m)^{2}}{4Mm}\Phi(P_{t}(\omega; {\mathbb A}))\nonumber\\&
\leq\frac{(M+m)^{2}}{4Mm}P_{t}(\omega; \Phi({\mathbb A})).
\end{align}
Now, by applying inequality \eqref{2} we get
\begin{align}\label{26}
\Phi^{p}\Big(\sum_{i=1}^{n}w_{i}A_{i}\Big)\leq\Big(\frac{(M+m)^{2}}{4Mm}\Big)^{p}P_{t}^{p}(\omega; \Phi({\mathbb A}))
\end{align}
for $0<p\leq1$.
\end{remark}
In the next theorem, we show that inequality \eqref{26} is valid for $p>1$.
\begin{theorem}\label{th}
Let ${\mathbb A}=(A_{1},\cdots,A_{n})$ be a $n$-tuple of positive definite matrices with $0<m\leq A_{i}\leq M,\ (i=1,\cdots,n)$ for some scalars $m< M$ and $\omega=(w_{1},\cdots,w_{n})$ a weight vector. Then
\begin{align}\label{17}
\Phi^{p}\Big(\sum_{i=1}^{n}w_{i}A_{i}\Big)\leq \alpha^{p}\Phi^{p}(P_{t}(\omega; {\mathbb A})),
\end{align}
where  $t\in [-1, 1]\backslash\{0\}$, $p>1$ and $\alpha=\max\Big\{\frac{(M+m)^{2}}{4Mm}, \frac{(M+m)^{2}}{4^{\frac{2}{p}}Mm}\Big\}$.
\end{theorem}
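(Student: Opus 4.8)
The plan is to reduce the operator inequality \eqref{17} to a norm estimate via Lemma \ref{6}(iii), split into the two regimes $p\ge2$ and $1<p\le2$ that correspond exactly to the two candidates in $\alpha=\max\{\frac{(M+m)^2}{4Mm},\frac{(M+m)^2}{4^{2/p}Mm}\}$, and handle the second regime by descending from the first. Throughout I would write $C=\Phi(\sum_i w_iA_i)=\sum_i w_i\Phi(A_i)$ and $D=\Phi(P_t(\omega;{\mathbb A}))$; since $m\le A_i\le M$ forces $m\le\sum_i w_iA_i\le M$ and, by \eqref{11}, $m\le P_t(\omega;{\mathbb A})\le M$, unitality and positivity of $\Phi$ give $mI\le C\le MI$ and $mI\le D\le MI$.

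For the range $p\ge2$ I would mimic the argument of Theorem \ref{p}. By Lemma \ref{6}(iii), inequality \eqref{17} with $\alpha=\frac{(M+m)^2}{4^{2/p}Mm}$ is equivalent to $\|C^{p/2}D^{-p/2}\|\le\frac{(M+m)^p}{4M^{p/2}m^{p/2}}$. I would bound $M^{p/2}m^{p/2}\|C^{p/2}D^{-p/2}\|=\|C^{p/2}(MmD^{-1})^{p/2}\|$ first by Lemma \ref{6}(i), then by Lemma \ref{6}(ii) (legitimate since $p/2\ge1$) to collapse the sum inside a single $\frac p2$-power, reaching $\frac14\|C+MmD^{-1}\|^p$. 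Then $D^{-1}=\Phi(P_t)^{-1}\le\Phi(P_t^{-1})\le\Phi(\sum_i w_iA_i^{-1})$ by Choi's inequality \eqref{10} and \eqref{11}, so $C+MmD^{-1}\le\sum_i w_i(\Phi(A_i)+Mm\Phi(A_i^{-1}))\le(M+m)I$ by \eqref{haj2}; hence $\|C+MmD^{-1}\|\le M+m$ and the estimate closes. Squaring the resulting norm bound through Lemma \ref{6}(iii) returns $\big(\frac{(M+m)^p}{4M^{p/2}m^{p/2}}\big)^2=\alpha^p$ with $\alpha=\frac{(M+m)^2}{4^{2/p}Mm}$.

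For the range $1<p\le2$ the target constant is $\alpha=\frac{(M+m)^2}{4Mm}=:\beta$, and the point is that the previous step, specialised to $p=2$, already yields $\|CD^{-1}\|\le\beta$, that is, the operator inequality $C^2\le\beta^2D^2$. Since $0<\frac p2\le1$, inequality \eqref{2} applied with exponent $\frac p2$ gives $C^p=(C^2)^{p/2}\le(\beta^2D^2)^{p/2}=\beta^pD^p$, which is \eqref{17} in this regime. The two cases overlap consistently at $p=2$, where the two expressions for $\alpha$ coincide.

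The only genuine obstacle is that $X\mapsto X^p$ fails to be operator monotone for $p>1$, so one cannot simply raise \eqref{13} to the $p$-th power; this is precisely what forces the split. The norm linearisation through Lemma \ref{6}(i)--(iii) is available only when $p/2\ge1$, and below that threshold one must instead borrow the already-established $p=2$ inequality and pull it down with the genuinely monotone exponent $\frac p2\le1$. I would expect the chained bound $C+MmD^{-1}\le(M+m)I$ (combining \eqref{10}, \eqref{11} and \eqref{haj2}) to require the most careful bookkeeping, and the identification of $\big(\frac{(M+m)^p}{4M^{p/2}m^{p/2}}\big)^2$ with $\alpha^p$ to need attention to the exponent $4^{2/p}$.
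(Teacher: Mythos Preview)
Your proof is correct and follows essentially the same route as the paper: the split into $p\ge2$ (handled via Lemma~\ref{6}(i)--(iii), Choi's inequality~\eqref{10}, the interpolation~\eqref{11}, and the scalar bound~\eqref{haj2}) and $1<p\le2$ (obtained from the $p=2$ case by operator monotonicity of $x^{p/2}$) is exactly what the paper does. The only cosmetic difference is that the paper first proves the $p=2$ case separately (without Lemma~\ref{6}(ii)) and then the $p>2$ case, whereas you prove $p\ge2$ in one stroke and specialise to $p=2$; the content is identical.
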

\begin{proof}
 First we show inequality \eqref{17} for $p=2$. We have
\begin{align*}
Mm\left\|\Phi\Big(\sum_{i=1}^{n}w_{i}A_{i}\Big)\Phi^{-1}(P_{t}(\omega; {\mathbb A}))\right\|&= \left\|\Phi\Big(\sum_{i=1}^{n}w_{i}A_{i}\Big)Mm\Phi^{-1}(P_{t}(\omega; {\mathbb A}))\right\|
\\&\leq\frac{1}{4}\left\|\Phi\Big(\sum_{i=1}^{n}w_{i}A_{i}\Big)+Mm\Phi^{-1}(P_{t}(\omega; {\mathbb A}))\right\|^{2}\\&
\qquad\qquad\qquad    \qquad (\textrm{by Lemma \ref{6}})\\&
\leq\frac{1}{4}\left\|\Phi\Big(\sum_{i=1}^{n}w_{i}A_{i}\Big)+Mm\Phi(\sum_{i=1}^{n}w_{i}A_{i}^{-1})\right\|^2\\&
\leq\frac{1}{4}(M+m)^2,
\end{align*}
 whence
 \begin{align*}
\left\|\Phi\Big(\sum_{i=1}^{n}w_{i}A_{i}\Big)\Phi^{-1}(P_{t}(\omega; {\mathbb A}))\right\|\leq \frac{(M+m)^{2}}{4Mm}.
\end{align*}
Hence
\begin{align*}
\Phi^{2}\Big(\sum_{i=1}^{n}w_{i}A_{i}\Big)\leq \Big(\frac{(M+m)^{2}}{4Mm}\Big)^{2}\Phi^{2}(P_{t}(\omega; {\mathbb A})).
\end{align*}
Therefore
\begin{align}\label{27}
\Phi^{p}\Big(\sum_{i=1}^{n}w_{i}A_{i}\Big)\leq \Big(\frac{(M+m)^{2}}{4Mm}\Big)^{p}\Phi^{p}(P_{t}(\omega; {\mathbb A})),\,\ (0\leq p\leq2)
\end{align}
Now, we prove  inequality \eqref{17} for $p>2$. In this case we have
\begin{align*}
\Big\|\Phi^{\frac{p}{2}}\Big(\sum_{i=1}^{n}w_{i}A_{i}\Big)M^{\frac{p}{2}}m^{\frac{p}{2}}&\Phi^{-\frac{p}{2}}(P_{t}(\omega; {\mathbb A})) \Big\|\\&\leq\frac{1}{4}\Big\|\Phi^{\frac{p}{2}}\Big(\sum_{i=1}^{n}w_{i}A_{i}\Big)+M^{\frac{p}{2}}m^{\frac{p}{2}}
\Phi^{-\frac{p}{2}}(P_{t}(\omega; {\mathbb A}))\Big\|^{2}\\&
 \qquad  \qquad   \qquad (\textrm{by Lemma \ref{6}(i)})\\&
\leq\frac{1}{4}\Big\|\Big(\Phi\Big(\sum_{i=1}^{n}w_{i}A_{i}\Big)+Mm\Phi^{-1}(P_{t}(\omega; {\mathbb A}))\Big)^{\frac{p}{2}}\Big\|^{2} \\&
\qquad  \qquad   \qquad (\textrm{by Lemma \ref{6}(ii)})\\&
=\frac{1}{4}\Big\|\Phi\Big(\sum_{i=1}^{n}w_{i}A_{i}\Big)+Mm\Phi^{-1}(P_{t}(\omega; {\mathbb A}))\Big\|^{p}\\&
\leq\frac{(M+m)^{p}}{4}.
\end{align*}
Hence
\begin{align*}
\left\|\Phi^{\frac{p}{2}}\Big(\sum_{i=1}^{n}w_{i}A_{i}\Big)\Phi^{-\frac{p}{2}}(P_{t}(\omega; {\mathbb A})) \right\|\leq \frac{1}{4}\Big(\frac{(M+m)^{p}}{M^{\frac{p}{2}}m^{\frac{p}{2}}}\Big).
\end{align*}
Thus
\begin{align}\label{28}
\Phi^{p}\Big(\sum_{i=1}^{n}w_{i}A_{i}\Big)\leq\Big(\frac{(M+m)^{2}}{4^{\frac{2}{p}}Mm}\Big)^{p}\Phi^{p}(P_{t}(\omega; {\mathbb A})).
\end{align}
Now, if we take $\alpha=\max\Big\{\frac{(M+m)^{2}}{4Mm}, \frac{(M+m)^{2}}{4^{\frac{2}{p}}Mm}\Big\}$, then by  \eqref{27} and \eqref{28} we get the desired result.
\end{proof}
\begin{remark}
By letting ${\mathbb A}=(A,B)$ and $\omega=(w_{1},w_{2})$ with $w_{1}=w_{2}=\frac{1}{2}$ in Theorem \ref{th}, the following inequalities are hold:
\begin{align*}
\Phi^{p}\Big( \frac{A+B}{2}\Big)\leq\alpha^{p}\Phi^{p}(A\sharp B),
\end{align*}
Which appeared in \cite[Theorem 4]{Fu}. where $\alpha=\max\Big\{\frac{(M+m)^{2}}{4Mm}, \frac{(M+m)^{2}}{4^{\frac{2}{p}}Mm}\Big\}$.
\end{remark}
\begin{corollary}
Let ${\mathbb A}=(A_{1},\cdots,A_{n})$ be a $n$-tuple of positive definite matrices with $0<m\leq A_{i}\leq M,\ (i=1,\cdots,n)$ for some scalars $m\leq M$ and $\omega=(w_{1},\cdots,w_{n})$ a weight vector, and let $t\in [-1, 1]\backslash\{0\}$. Then
\begin{align*}
\Phi^{p}\Big(\sum_{i=1}^{n}w_{i}A_{i}\Big)\leq \alpha^{p}\Phi^{p}\Lambda(\omega; {\mathbb A}),
\end{align*}
where $p\geq1$ and $\alpha=\max\Big\{\frac{(M+m)^{2}}{4Mm}, \frac{(M+m)^{2}}{4^{\frac{2}{p}}Mm}\Big\}$.
\end{corollary}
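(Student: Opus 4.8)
The plan is to derive the Karcher-mean inequality as the limit $t\to 0$ of Theorem \ref{th}, mirroring the passage from Theorem \ref{p} to its corollary. The key structural observation is that the left-hand side $\Phi^{p}\big(\sum_{i=1}^{n}w_{i}A_{i}\big)$ and the constant $\alpha$ depend only on $m$, $M$, $p$ and the fixed data $A_{i},w_{i}$, and are in particular independent of $t$; this is precisely what makes a limiting argument feasible.

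First I would fix $p>1$ and apply Theorem \ref{th} to obtain, for every $t\in[-1,1]\backslash\{0\}$,
\begin{align*}
\Phi^{p}\Big(\sum_{i=1}^{n}w_{i}A_{i}\Big)\leq \alpha^{p}\,\Phi^{p}(P_{t}(\omega; {\mathbb A})).
\end{align*}
Then I would let $t\to 0$. By relation \eqref{14}, $P_{t}(\omega; {\mathbb A})\to\Lambda(\omega; {\mathbb A})$ in the finite-dimensional space $\mathcal{M}_{n}$. Since $\Phi$ is linear it is continuous, and the power map $X\mapsto X^{p}$ is continuous on the positive definite cone (through the Gelfand functional calculus), so that $\Phi^{p}(P_{t}(\omega; {\mathbb A}))\to\Phi^{p}(\Lambda(\omega; {\mathbb A}))$; multiplication by the fixed positive scalar $\alpha^{p}$ preserves this convergence.

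The only genuine obstacle is justifying that the inequality survives the limit. This rests on the fact that the cone of positive semidefinite matrices is closed: if $X_{t}\leq Y_{t}$ for all $t$, with $X_{t}\to X$ and $Y_{t}\to Y$, then $Y-X$ is a limit of positive semidefinite matrices and hence positive semidefinite, giving $X\leq Y$. Taking $X_{t}=\Phi^{p}\big(\sum_{i=1}^{n}w_{i}A_{i}\big)$ (constant in $t$) and $Y_{t}=\alpha^{p}\Phi^{p}(P_{t}(\omega; {\mathbb A}))$ then yields the desired bound for all $p>1$.

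Finally, the boundary value $p=1$ lies outside the range of Theorem \ref{th}, but there $\alpha=\frac{(M+m)^{2}}{4Mm}$, and the claim reduces to applying the positive (hence order-preserving) map $\Phi$ to both sides of Corollary \ref{AM}; equivalently, one runs the same limiting argument on the first inequality in \eqref{13}. This completes the plan.
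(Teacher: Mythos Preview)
Your proposal is correct and matches the paper's intended route: the paper gives no separate argument for this corollary, so it is meant to follow from Theorem~\ref{th} by the limit relation~\eqref{14}, exactly as you outline. Your extra care at $p=1$ is harmless but not strictly needed, since the proof of Theorem~\ref{th} (via \eqref{27}) already covers the full range $0<p\le 2$.
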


In the next result we extend  inequalities \eqref{4} and \eqref{17} to the follwing form.
 \begin{theorem}
 Let ${\mathbb A}=(A_{1},\cdots,A_{n})$ be a $n$-tuple of positive definite matrices with $0<m\leq A_{i}\leq M,\ (i=1,\cdots,n)$ for some scalars $m\leq M$ and $\omega=(w_{1},\cdots,w_{n})$ a weight vector, let $t\in [-1, 1]\backslash\{0\}$ and $\Phi$ be a positive unital linear map. Then
 \begin{align*}
 P_{t}^{p}(\omega; {\mathbb A})\Phi^{-p}(P_{t}(\omega; {\mathbb A}))+\Phi^{-p}(P_{t}(\omega; {\mathbb A}))P_{t}^{p}(\omega; {\mathbb A})\leq 2\alpha^{p}
 \end{align*}
 and
 \begin{align}\label{24}
 \Phi^{p}\Big(\sum_{i=1}^{n}w_{i}A_{i}\Big)\Phi^{-p}(P_{t}(\omega; {\mathbb A}))+\Phi^{-p}(P_{t}(\omega; {\mathbb A}))\Phi^{p}\Big(\sum_{i=1}^{n}w_{i}A_{i}\Big)\leq 2\alpha^{p},
 \end{align}
 where  $p>0$ and $\alpha=\max\Big\{\frac{(m+M)^{2}}{4mM}, \frac{(m+M)^{2}}{4^{\frac{1}{p}}mM}\Big\}.$
 \end{theorem}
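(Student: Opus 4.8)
The plan is to reduce both assertions to a single estimate on the operator norm of a product, and then to prove that estimate by mimicking the proof of Theorem \ref{p}, but carrying full $p$-th powers instead of half powers.

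Write $Y:=\Phi^{-p}(P_t(\omega;\mathbb A))$, and let $X$ denote either $P_t^p(\omega;\Phi(\mathbb A))$ (for the first inequality, which is the anticommutator form of \eqref{4}) or $\Phi^p\big(\sum_iw_iA_i\big)$ (for \eqref{24}). Both claims have the shape $XY+YX\le 2\alpha^pI$. Since $X,Y$ are positive definite, $S:=XY+YX$ is Hermitian, so $S\le\lambda_{\max}(S)\,I\le\|S\|\,I$; moreover $\|S\|\le\|XY\|+\|YX\|=2\|XY\|$, because $\|YX\|=\|(XY)^*\|=\|XY\|$. Hence it suffices to prove the single product estimate $\|XY\|\le\alpha^p$, and this is identical for both inequalities apart from the choice of $X$. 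This reduction is the structural heart of the argument.

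For $p\ge1$ I would copy the computation in the proof of Theorem \ref{p}, only with exponent $p$ in place of $p/2$. Thus $(Mm)^p\|XY\|=\|X\,(Mm)^pY\|\le\frac14\|X+(Mm)^pY\|^2$ by Lemma \ref{6}(i). Writing $R:=P_t(\omega;\Phi(\mathbb A))+Mm\,\Phi^{-1}(P_t(\omega;\mathbb A))$ (respectively $R:=\Phi(\sum_iw_iA_i)+Mm\,\Phi^{-1}(P_t(\omega;\mathbb A))$ for \eqref{24}), Lemma \ref{6}(ii) with exponent $p\ge1$ collapses the right side to $\frac14\|R^p\|^2=\frac14\|R\|^{2p}$. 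Bounding $R$ exactly as in Theorem \ref{p} — $P_t(\omega;\Phi(\mathbb A))\le\sum_iw_i\Phi(A_i)$ by \eqref{11}, then $\Phi^{-1}(P_t)\le\Phi(P_t^{-1})\le\sum_iw_i\Phi(A_i^{-1})$ by \eqref{10} and \eqref{11}, and finally $\Phi(A_i+MmA_i^{-1})\le(M+m)I$ by \eqref{haj2} — gives $R\le(M+m)I$, whence $\|XY\|\le\frac{(M+m)^{2p}}{4(Mm)^p}=\big(\frac{(M+m)^2}{4^{1/p}Mm}\big)^p$, which for $p\ge1$ is exactly $\alpha^p$.

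The delicate point, and the main obstacle, is the range $0<p<1$: there Lemma \ref{6}(ii) actually reverses (for exponents below $1$ one has $(a+b)^p\le a^p+b^p$ for positive $a,b$), so the merging step is unavailable. I would instead first record the $p=1$ case — which needs only Lemma \ref{6}(i) and the same bound on $R$ — namely $\|P_t(\omega;\Phi(\mathbb A))\,\Phi^{-1}(P_t(\omega;\mathbb A))\|\le\frac{(M+m)^2}{4Mm}$, and then invoke the Cordes inequality $\|a^pb^p\|\le\|ab\|^p$ (valid for positive definite $a,b$ and $0<p\le1$) with $a=P_t(\omega;\Phi(\mathbb A))$, $b=\Phi^{-1}(P_t(\omega;\mathbb A))$. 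This yields $\|XY\|=\|a^pb^p\|\le\|ab\|^p\le\big(\frac{(M+m)^2}{4Mm}\big)^p$, which for $0<p<1$ is exactly $\alpha^p$. In both regimes $\|XY\|\le\alpha^p$, so the reduction of the first paragraph finishes the proof; the maximum in the definition of $\alpha$ serves precisely to select the correct one of the two constants on either side of $p=1$. The second inequality \eqref{24} follows by the same two steps, the only change being that $R$ now uses $\Phi(\sum_iw_iA_i)=\sum_iw_i\Phi(A_i)$ in place of $P_t(\omega;\Phi(\mathbb A))$, which again yields $R\le(M+m)I$ via \eqref{haj2}.
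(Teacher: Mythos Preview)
Your argument is correct; both it and the paper reduce the two anticommutator inequalities to the single bound $\|XY\|\le\alpha^p$, and you correctly read the first assertion as involving $P_t(\omega;\Phi(\mathbb A))$ (the paper's proof cites \eqref{3} and \eqref{4}, which are statements about $P_t(\omega;\Phi(\mathbb A))$, so the omission of $\Phi$ in the displayed formula is a typo).

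The routes diverge in two places. For the reduction, the paper packages $\|XY\|\le\alpha^p$ via Lemma~\ref{19}: the two $2\times2$ block matrices with off-diagonal entries $XY$ and $YX$ are each positive, their sum is positive, and Lemma~\ref{19} again gives $XY+YX\le 2\alpha^pI$. Your triangle-inequality argument $\|XY+YX\|\le 2\|XY\|$ accomplishes the same thing more directly and avoids invoking Lemma~\ref{19} altogether. For the norm bound itself, the paper does not redo any computation: it simply applies the already-established inequalities \eqref{3} (for $0<p\le1$) and \eqref{4} (for $p>1$) with exponent $2p$, and then Lemma~\ref{6}(iii) converts those operator inequalities into the norm bound; inequality \eqref{24} is handled identically by quoting Theorem~\ref{th}. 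You instead rerun the proof of Theorem~\ref{p}/Theorem~\ref{th} with exponent $p$ in place of $p/2$ for $p\ge1$, and for $0<p<1$ you import the Cordes inequality $\|a^pb^p\|\le\|ab\|^p$ as a separate tool. Both approaches are valid; the paper's is more economical (it reuses what has been proved and needs no outside ingredient), while yours is more self-contained at the cost of introducing Cordes, which the paper does not need because \eqref{3} already covers the range $0<p\le1$ via Lemma~\ref{6}(iii).
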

 \begin{proof}
 By inequality \eqref{3} and Lemma \ref{6}(iii) for $0<p\leq 1$ we have
 \begin{align*}
 ||P_{t}^{p}(\omega; {\mathbb A})\Phi^{-p}(P_{t}(\omega; {\mathbb A}))||\leq\Big(\frac{(m+M)^{2}}{4mM}\Big)^{p}.
 \end{align*}
 We put $\alpha=\frac{(m+M)^{2}}{4mM}$. Using Lemma \ref{19} we get
  \begin{align*}
 \left[\begin{array}{cc}
 \alpha^{p}I&P_{t}^{p}(\omega; {\mathbb A})\Phi^{-p}(P_{t}(\omega; {\mathbb A}))\\
 \Phi^{-p}(P_{t}(\omega; {\mathbb A}))P_{t}^{p}(\omega; {\mathbb A})&\alpha^{p}I
 \end{array}\right]
 \end{align*}
 and
 \begin{align*}
 \left[\begin{array}{cc}
 \alpha^{p}I&\Phi^{-p}(P_{t}(\omega; {\mathbb A}))P_{t}^{p}(\omega; {\mathbb A})\\
 P_{t}^{p}(\omega; {\mathbb A})\Phi^{-p}(P_{t}(\omega; {\mathbb A}))\\&\alpha^{p}I
 \end{array}\right]
 \end{align*}
 are positive. Hence
 {\tiny\begin{align*}
 \left[\begin{array}{cc}
 2\alpha^{p}I&P_{t}^{p}(\omega; {\mathbb A})\Phi^{-p}(P_{t}(\omega; {\mathbb A}))+\Phi^{-p}(P_{t}(\omega; {\mathbb A}))P_{t}^{p}(\omega; {\mathbb A})\\
 \Phi^{-p}(P_{t}(\omega; {\mathbb A}))P_{t}^{p}(\omega; {\mathbb A})+P_{t}^{p}(\omega; {\mathbb A})\Phi^{-p}(P_{t}(\omega; {\mathbb A}))&2\alpha^{p}I
 \end{array}\right]
 \end{align*}}
is positive. Using Lemma \ref{19} we get
 \begin{align*}
 P_{t}^{p}(\omega; {\mathbb A})\Phi^{-p}(P_{t}(\omega; {\mathbb A}))+\Phi^{-p}(P_{t}(\omega; {\mathbb A}))P_{t}^{p}(\omega; {\mathbb A})\leq 2\alpha^{p}.
 \end{align*}
 For $p>1$, using inequality \eqref{4} with the same argument, we get the desired inequality.\\
 Inequality \eqref{24} is proved by using Theorem \ref{th} and a similar method.
 \end{proof}
 \begin{corollary}
 Let ${\mathbb A}=(A_{1},\cdots,A_{n})$ be a $n$-tuple of positive definite matrices with $0<m\leq A_{i}\leq M,\ (i=1,\cdots,n)$ for some scalars $m\leq M$ and $\omega=(w_{1},\cdots,w_{n})$ a weight vector, $\Phi$ be a positive unital linear map. Then
 \begin{align*}
\Lambda^{p}(\omega; {\mathbb A})\Phi^{-p}(\Lambda(\omega; {\mathbb A}))+\Phi^{-p}(\Lambda(\omega; {\mathbb A}))\Lambda^{p}(\omega; {\mathbb A})\leq 2\alpha^{p}
 \end{align*}
 and
 \begin{align}\label{24}
 \Phi^{p}\Big(\sum_{i=1}^{n}w_{i}A_{i}\Big)\Phi^{-p}(\Lambda(\omega; {\mathbb A}))+\Phi^{-p}(\Lambda(\omega; {\mathbb A}))\Phi^{p}\Big(\sum_{i=1}^{n}w_{i}A_{i}\Big)\leq 2\alpha^{p},
 \end{align}
 where  $p>0$ and $\alpha=\max\Big\{\frac{(m+M)^{2}}{4mM}, \frac{(m+M)^{2}}{4^{\frac{1}{p}}mM}\Big\}.$
 \end{corollary}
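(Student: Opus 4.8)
The plan is to obtain both asserted inequalities as the $t\to 0$ limit of the two inequalities in the preceding theorem, exactly in the spirit of the corollaries already deduced in this section from Theorem~\ref{p} via relation~\eqref{14}. The crucial structural observation is that the constant $\alpha=\max\Big\{\frac{(m+M)^{2}}{4mM}, \frac{(m+M)^{2}}{4^{\frac{1}{p}}mM}\Big\}$ depends only on $m$, $M$ and $p$ and is in particular \emph{independent} of the parameter $t$; hence the right-hand side $2\alpha^{p}$ is a fixed constant as $t$ varies, and no limiting behaviour of the bound needs to be controlled.

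First I would record, for every $t\in[-1,1]\setminus\{0\}$, the two inequalities furnished by the preceding theorem, namely
\begin{align*}
P_{t}^{p}(\omega; {\mathbb A})\Phi^{-p}(P_{t}(\omega; {\mathbb A}))+\Phi^{-p}(P_{t}(\omega; {\mathbb A}))P_{t}^{p}(\omega; {\mathbb A})&\leq 2\alpha^{p},\\
\Phi^{p}\Big(\sum_{i=1}^{n}w_{i}A_{i}\Big)\Phi^{-p}(P_{t}(\omega; {\mathbb A}))+\Phi^{-p}(P_{t}(\omega; {\mathbb A}))\Phi^{p}\Big(\sum_{i=1}^{n}w_{i}A_{i}\Big)&\leq 2\alpha^{p}.
\end{align*}
Next I would let $t\to 0$ and invoke relation~\eqref{14}, $\lim_{t\to 0}P_{t}(\omega; {\mathbb A})=\Lambda(\omega; {\mathbb A})$, in order to replace $P_{t}(\omega; {\mathbb A})$ by $\Lambda(\omega; {\mathbb A})$ throughout. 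Since the maps $X\mapsto X^{p}$, $X\mapsto \Phi(X)$, $X\mapsto X^{-1}$, together with matrix multiplication and addition, are all continuous on the cone of positive definite matrices, each of the two summands on every left-hand side converges to the corresponding expression in which $P_{t}(\omega; {\mathbb A})$ is replaced by $\Lambda(\omega; {\mathbb A})$. Because the relation $\leq$ is closed, it is preserved under this limit, and we arrive precisely at the two inequalities in the statement.

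The only point that genuinely needs care is the legitimacy of passing to the limit, and for this I would use the uniform order bounds. By \eqref{11} we have $m\le P_{t}(\omega; {\mathbb A})\le M$ for all admissible $t$, and by \eqref{karcher} the same bounds hold for $\Lambda(\omega; {\mathbb A})$; thus all matrices appearing in the argument remain inside the compact set $\{X: mI\le X\le MI\}$, on which the continuous functional calculus $X\mapsto X^{p}$ is uniformly continuous and bounded away from singularity. Moreover $\Phi(P_{t}(\omega; {\mathbb A}))\ge m\,\Phi(I)=mI>0$, so $\Phi^{-p}(P_{t}(\omega; {\mathbb A}))$ stays uniformly bounded and converges to $\Phi^{-p}(\Lambda(\omega; {\mathbb A}))$ with no spectral degeneration. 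I expect this continuity-and-compactness bookkeeping to be the only real obstacle; once it is in place, the convergence of each summand and the persistence of the inequality are immediate, and the Karcher-mean version follows at once.
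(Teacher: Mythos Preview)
Your approach is correct and coincides with the paper's own (implicit) argument: the corollary is stated without proof and is understood to follow from the preceding theorem together with relation~\eqref{14}, exactly as you describe. Your additional care about the uniform bounds $m\le P_{t}(\omega;\mathbb{A})\le M$ to justify the limit is more detail than the paper provides, but the method is the same.
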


In the next result, we would like to obtain unitary invariant norm inequality involving matrix power means.
\begin{proposition}\label{18}
Let ${\mathbb A}=(A_{1},\cdots,A_{n})$ be a $n$-tuple of positive definite matrices with $0<m\leq A_{i}\leq M,\ (i=1,\cdots,n)$ for some scalars $m\leq M$ and $\omega=(w_{1},\cdots,w_{n})$ a weight vector, and let $|||\cdot|||$ be an unitary invariant norm. Then for $t\in (0, 1]$\\
$|||P_{t}(\omega; {\mathbb A})|||\leq\sum_{i=1}^{n}w_{i}|||A_{i}|||$ \,\,\,\,\,\, and \,\,\,\,\ $|||P_{-t}(\omega; {\mathbb A})|||\geq\Big(\sum_{i=1}^{n}w_{i}|||A_{i}^{-1}|||\Big)^{-1}$.
\end{proposition}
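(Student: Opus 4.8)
The plan is to deduce both inequalities from the interpolation bound \eqref{11} combined with two elementary features of a unitarily invariant norm: its monotonicity on the positive cone and the triangle inequality. Recall the monotonicity fact I will use repeatedly: if $0\leq X\leq Y$, then $\lambda_{j}(X)\leq\lambda_{j}(Y)$ for every $j$ by Weyl's monotonicity principle; since $X,Y\geq0$ their singular values coincide with their eigenvalues, and every unitarily invariant norm is a monotone symmetric gauge function of the singular values, so $|||X|||\leq|||Y|||$.

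For the first estimate I would start from the upper half of \eqref{11}, namely $P_{t}(\omega;{\mathbb A})\leq\sum_{i=1}^{n}w_{i}A_{i}$, valid for $t\in(0,1]$. Monotonicity gives $|||P_{t}(\omega;{\mathbb A})|||\leq|||\sum_{i=1}^{n}w_{i}A_{i}|||$, and the triangle inequality together with positive homogeneity yields $|||\sum_{i=1}^{n}w_{i}A_{i}|||\leq\sum_{i=1}^{n}w_{i}|||A_{i}|||$; chaining these proves $|||P_{t}(\omega;{\mathbb A})|||\leq\sum_{i=1}^{n}w_{i}|||A_{i}|||$. For the second estimate I would apply the lower half of \eqref{11} at the index $-t$ (legitimate since $-t\in[-1,0)$ for $t\in(0,1]$): $(\sum_{i=1}^{n}w_{i}A_{i}^{-1})^{-1}\leq P_{-t}(\omega;{\mathbb A})$. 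As both sides are positive definite, inverting reverses the order, so $P_{-t}(\omega;{\mathbb A})^{-1}\leq\sum_{i=1}^{n}w_{i}A_{i}^{-1}$, and monotonicity plus the triangle inequality give $|||P_{-t}(\omega;{\mathbb A})^{-1}|||\leq\sum_{i=1}^{n}w_{i}|||A_{i}^{-1}|||$.

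The remaining, and genuinely load-bearing, step is to convert this bound on the norm of the inverse into the claimed lower bound on $|||P_{-t}(\omega;{\mathbb A})|||$ itself. For this I would invoke the inequality $|||X|||\,|||X^{-1}|||\geq1$, valid for every positive definite $X$ and every normalized unitarily invariant norm: since the spectral norm is the smallest normalized unitarily invariant norm, $|||X|||\geq\|X\|=\lambda_{\max}(X)$ and $|||X^{-1}|||\geq\|X^{-1}\|=\lambda_{\min}(X)^{-1}$, whence $|||X|||\,|||X^{-1}|||\geq\lambda_{\max}(X)/\lambda_{\min}(X)\geq1$. Taking $X=P_{-t}(\omega;{\mathbb A})$ and combining with the previous step gives $|||P_{-t}(\omega;{\mathbb A})|||\geq|||P_{-t}(\omega;{\mathbb A})^{-1}|||^{-1}\geq(\sum_{i=1}^{n}w_{i}|||A_{i}^{-1}|||)^{-1}$, as required.

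The main obstacle is precisely this passage from $P_{-t}^{-1}$ back to $P_{-t}$. Unlike the scale-invariant first inequality, the second is not invariant under rescaling the norm, so it forces the normalization convention $|||\,\mathrm{diag}(1,0,\ldots,0)\,|||=1$; under that convention the sandwich $\|\cdot\|\leq|||\cdot|||\leq\|\cdot\|_{1}$ between the spectral and trace norms supplies the key estimate $|||X|||\,|||X^{-1}|||\geq1$, without which the stated inequality could only hold up to a normalization constant. Everything else reduces to monotonicity, order-reversal under inversion, and the triangle inequality.
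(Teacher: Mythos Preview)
Your argument is correct, but you take a different route to the first inequality than the paper. You invoke the interpolation bound \eqref{11} directly and read off $|||P_{t}(\omega;\mathbb{A})|||\leq|||\sum_i w_iA_i|||\leq\sum_i w_i|||A_i|||$ by monotonicity and the triangle inequality. The paper instead works from the defining fixed-point equation $X=\sum_i w_i(X\sharp_t A_i)$: applying the triangle inequality, then the scalar AM--GM estimate $X\sharp_t A_i\leq(1-t)X+tA_i$ together with monotonicity, one obtains $|||X|||\leq(1-t)|||X|||+t\sum_i w_i|||A_i|||$, and solving this linear inequality in $|||X|||$ gives the result. Your approach is shorter and uses only what the paper already records; the paper's approach is self-contained in that it avoids quoting \eqref{11}. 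For the second inequality the two arguments are essentially the same: the paper writes $P_{-t}(\omega;\mathbb{A})=P_t(\omega;\mathbb{A}^{-1})^{-1}$, applies $|||A^{-1}|||\geq|||A|||^{-1}$, and then feeds the first part back in, which amounts to the same chain you write. Your explicit remark that $|||X|||\,|||X^{-1}|||\geq1$ requires the normalization $|||\mathrm{diag}(1,0,\dots,0)|||=1$ is a point the paper uses silently.
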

\begin{proof}
Let $X=P_{t}(\omega; {\mathbb A})$. Then
\begin{align*}
|||X|||=|||P_{t}(\omega; {\mathbb A})|||&\leq\sum_{i=1}^{n}w_{i}|||X\sharp_{t}A_{i}|||\\&
\leq \sum_{i=1}^{n}w_{i}|||(1-t)X+tA_{i}|||\\&
\leq |||(1-t)X|||\sum_{i=1}^{n}w_{i}+t\sum_{i=1}^{n}w_{i}|||A_{i}|||,
\end{align*}
 which implies that $|||P_{t}(\omega; {\mathbb A})|||\leq\sum_{i=1}^{n}w_{i}|||A_{i}|||$. For second inequality, it follows from $|||A^{-1}|||\geq|||A|||^{-1}$ for any $A>0$ that
\begin{align*}
|||P_{-t}(\omega; {\mathbb A})|||=|||P_{t}(\omega; {\mathbb A}^{-1})^{-1}|||\geq|||P_{t}(\omega; {\mathbb A}^{-1})|||^{-1}\geq\Big(\sum_{i=1}^{n}w_{i}|||A_{i}^{-1}|||\Big)^{-1}.
\end{align*}
\end{proof}

\section{ Some refinements}
In this section, we give a refinement of inequality \eqref{17}. This inequality can be refined by a similar method that known in \cite{zh}.
\begin{theorem}\label{ref}
Let ${\mathbb A}=(A_{1},\cdots,A_{n})$ be a $n$-tuple of positive definite matrices with $0<m\leq A_{i}\leq M,\ (i=1,\cdots,n)$ for some scalars $m\leq M$ and $\omega=(w_{1},\cdots,w_{n})$ a weight vector, and let $t\in [-1, 1]\backslash\{0\}$. Then for every positive unital linear map $\Phi$
\begin{align}\label{23}
\Phi^{2p}\Big(\sum_{i=1}^{n}w_{i}A_{i}\Big)\leq\frac{(K(M^{2}+m^{2}))^{2p}}{16M^{2p}m^{2p}}\Phi^{2p}(P_{t}(\omega; {\mathbb A})),
\end{align}
where $p\geq2$ and $K=\frac{(M+m)^{2}}{4mM}$.
\end{theorem}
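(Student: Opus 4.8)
The plan is to mimic the proof of Theorem \ref{p}, inserting a ``squared'' Kantorovich estimate so as to produce the factor $M^{2}+m^{2}$. Write $S:=\Phi\big(\sum_{i=1}^{n}w_iA_i\big)$ and $T:=\Phi\big(P_t(\omega;{\mathbb A})\big)$ for brevity. Since $mI\le \sum_{i=1}^{n}w_iA_i\le MI$ and, by \eqref{11}, $mI\le P_t(\omega;{\mathbb A})\le MI$, both $S$ and $T$ satisfy $mI\le S,T\le MI$. By Lemma \ref{6}(iii) the claimed inequality \eqref{23} is equivalent to the norm bound
\begin{align*}
\big\|S^{p}T^{-p}\big\|\le \frac{\big(K(M^{2}+m^{2})\big)^{p}}{4M^{p}m^{p}},
\end{align*}
so it suffices to establish this.

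First I would reduce the norm bound to a single operator inequality. By Lemma \ref{6}(i),
\begin{align*}
M^{p}m^{p}\big\|S^{p}T^{-p}\big\|=\big\|S^{p}\,M^{p}m^{p}T^{-p}\big\|\le \tfrac14\big\|S^{p}+M^{p}m^{p}T^{-p}\big\|^{2},
\end{align*}
and then, writing $S^{p}=(S^{2})^{p/2}$ and $M^{p}m^{p}T^{-p}=(M^{2}m^{2}T^{-2})^{p/2}$ with $p/2\ge1$, Lemma \ref{6}(ii) gives
\begin{align*}
\big\|S^{p}+M^{p}m^{p}T^{-p}\big\|\le \big\|S^{2}+M^{2}m^{2}T^{-2}\big\|^{p/2}.
\end{align*}
Hence everything comes down to the key estimate
\begin{align}\label{crux}
S^{2}+M^{2}m^{2}T^{-2}\le K(M^{2}+m^{2})\,I ,
\end{align}
because once \eqref{crux} holds we obtain $\big\|S^{p}T^{-p}\big\|\le \frac{(K(M^{2}+m^{2}))^{p}}{4M^{p}m^{p}}$ and conclude by Lemma \ref{6}(iii).

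To prove \eqref{crux} I would treat the two summands separately. For the term $M^{2}m^{2}T^{-2}$, the Choi inequality \eqref{10} together with the upper bound in \eqref{11} gives $\Phi(P_t(\omega;{\mathbb A}))^{-1}\le \Phi(P_t(\omega;{\mathbb A})^{-1})\le \Phi\big(\sum_{i=1}^{n}w_iA_i^{-1}\big)$; since squaring is \emph{not} operator monotone I cannot square this relation directly, so I invoke Lemma \ref{6}(iv) with $\tfrac1M I\le \Phi(P_t(\omega;{\mathbb A}))^{-1}\le \tfrac1m I$, whose Kantorovich constant is exactly $\frac{(1/m+1/M)^{2}}{4/(Mm)}=K$, to get $T^{-2}\le K\,\Phi\big(\sum_{i=1}^{n}w_iA_i^{-1}\big)^{2}$. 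Using $K\ge1$ this yields $S^{2}+M^{2}m^{2}T^{-2}\le K\big[S^{2}+M^{2}m^{2}\Phi(\sum_{i=1}^{n}w_iA_i^{-1})^{2}\big]$. To bound the bracket by $(M^{2}+m^{2})I$ I would use the Kadison--Schwarz inequality $\Phi(X)^{2}\le\Phi(X^{2})$ together with the operator convexity of $x\mapsto x^{2}$ (so that $(\sum_{i}w_iA_i)^{2}\le\sum_{i}w_iA_i^{2}$, and likewise for the inverses) to pass to $\Phi$ of squares, obtaining
\begin{align*}
S^{2}+M^{2}m^{2}\Phi\Big(\sum_{i=1}^{n}w_iA_i^{-1}\Big)^{2}\le \Phi\Big(\sum_{i=1}^{n}w_i\big(A_i^{2}+M^{2}m^{2}A_i^{-2}\big)\Big).
\end{align*}
Finally the scalar estimate $x^{2}+M^{2}m^{2}x^{-2}\le M^{2}+m^{2}$ for $x\in[m,M]$ (the squared form of the $x+Mm/x$ bound behind \eqref{haj2}) gives $A_i^{2}+M^{2}m^{2}A_i^{-2}\le (M^{2}+m^{2})I$ for each $i$, and applying the unital positive map $\Phi$ completes \eqref{crux}.

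The main obstacle is exactly the key inequality \eqref{crux}. The point is that $K(M^{2}+m^{2})$ may be strictly smaller than $(M+m)^{2}$ (precisely when $\frac{M^{2}+m^{2}}{4Mm}<1$), so the crude route used for \eqref{17}---collapsing $S$ and $MmT^{-1}$ by Lemma \ref{6}(ii) at the top power and bounding their sum by $(M+m)I$---is too lossy here and must be avoided. The two devices that rescue the refinement are Lemma \ref{6}(iv), which pays the Kantorovich price $K$ for squaring the non-monotone inverse relation, and the passage $\Phi(X)^{2}\le\Phi(X^{2})$, which couples the arithmetic and harmonic terms tightly enough that the pointwise estimate $A_i^{2}+M^{2}m^{2}A_i^{-2}\le (M^{2}+m^{2})I$ can be applied under $\Phi$.
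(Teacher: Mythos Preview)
Your argument is correct and follows essentially the same skeleton as the paper: reduce via Lemma~\ref{6}(iii), (i), (ii) to an operator bound of the shape $S^{2}+M^{2}m^{2}T^{-2}\le K(M^{2}+m^{2})I$, then feed in Choi's inequality \eqref{10}, the harmonic--arithmetic comparison \eqref{11}, and the Kantorovich squaring device Lemma~\ref{6}(iv).

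Two minor differences are worth recording. First, the paper inserts the scaling factors $K^{p/2}$ and $K^{-p/2}$ \emph{before} applying Lemma~\ref{6}(i), writing
\[
M^{p}m^{p}\big\|S^{p}T^{-p}\big\|=\Big\|K^{p/2}S^{p}\cdot\Big(\tfrac{M^{2}m^{2}}{K}\Big)^{p/2}T^{-p}\Big\|,
\]
so that after Lemma~\ref{6}(ii) one lands directly on $\|KS^{2}+\tfrac{M^{2}m^{2}}{K}T^{-2}\|^{p}$; you instead obtain $\|S^{2}+M^{2}m^{2}T^{-2}\|^{p}$ and pull the factor $K$ out afterwards via the crude step $S^{2}\le KS^{2}$ (using $K\ge1$). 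Both routes arrive at the same constant. Second, for the terminal bound the paper simply quotes \cite[4.7]{mlin} as a black box, whereas you give a self-contained argument via Kadison's inequality $\Phi(X)^{2}\le\Phi(X^{2})$, operator convexity of $x\mapsto x^{2}$, and the scalar estimate $x^{2}+M^{2}m^{2}x^{-2}\le M^{2}+m^{2}$ on $[m,M]$. Your treatment of this last step is more elementary and transparent than the citation, but the overall strategy is the same.
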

\begin{proof}
For $p\geq2$, we have
\begin{align*}
\Big\|\Phi^{p}\Big(\sum_{i=1}^{n}w_{i}A_{i}\Big)&M^{p}m^{p}\Phi^{-p}(P_{t}(\omega; {\mathbb A}))\Big\|\\&\leq
\frac{1}{4}\Big\|K^{\frac{p}{2}}\Phi^{p}\Big(\sum_{i=1}^{n}w_{i}A_{i}\Big)+(\frac{M^{2}m^{2}}{K})^{\frac{p}{2}}\Phi^{-p}(P_{t}(\omega; {\mathbb A}))\Big\|^{2}\\&
\qquad  \qquad   \qquad \qquad \qquad \qquad  (\textrm{by Lemma \ref{6}(i)})\\&
\leq\frac{1}{4}\Big\|\Big(K\Phi^{2}\Big(\sum_{i=1}^{n}w_{i}A_{i}\Big)+\frac{M^{2}m^{2}}{K}\Phi^{-2}(P_{t}(\omega; {\mathbb A}))\Big)^{\frac{p}{2}}\Big\|^{2}\\&
\qquad  \qquad   \qquad \qquad \qquad \qquad (\textrm{by Lemma \ref{6}(ii)})\\&
=\frac{1}{4}\Big\|\Big(K\Phi^{2}\Big(\sum_{i=1}^{n}w_{i}A_{i}\Big)+\frac{M^{2}m^{2}}{K}\Phi^{-2}(P_{t}(\omega; {\mathbb A}))\Big)\Big\|^{p}\\&
\leq
\frac{1}{4}\Big\|\Big(K\Phi^{2}\Big(\sum_{i=1}^{n}w_{i}A_{i}\Big)+\frac{M^{2}m^{2}}{K}\Phi^{2}(P_{t}(\omega; {\mathbb A})^{-1})\Big)\Big\|^{p}\\&
\qquad  \qquad   \qquad \qquad \qquad \qquad (\textrm{by \eqref{10}})\\&
\leq \frac{1}{4}\Big\|K\Phi^{2}\Big(\sum_{i=1}^{n}w_{i}A_{i}\Big)+KM^{2}m^{2}\Phi^{2}
\Big(\sum_{i=1}^{n}w_{i}A_{i}^{-1}\Big)\Big\|^{p}\\&
\qquad  \qquad   \qquad \qquad \qquad \qquad (\textrm{by Lemma \ref{6}(iv)})\\&
=\frac{1}{4}(K(M^{2}+m^{2}))^{p}\qquad  \qquad   \qquad (\textrm{by \cite[4.7]{mlin}}).
\end{align*}
Hence
\begin{align}\label{22}
\Big\|\Phi^{p}\Big(\sum_{i=1}^{n}w_{i}A_{i}\Big)\Phi^{-p}(P_{t}(\omega; {\mathbb A}))\Big\|\leq\frac{1}{4}\Big( \frac{K(M^{2}+m^{2})}{Mm} \Big)^{p}.
\end{align}
Since \eqref{22} is equivalent to \eqref{23}, so inequality \eqref{23} holds.
\end{proof}
\begin{remark}
If we put ${\mathbb A}=(A,B)$ and $\omega=(w_{1},w_{2})$ with $w_{1}=w_{2}=\frac{1}{2}$ in Theorem \ref{ref}, then we get \cite[Theorem 2.6]{zh} as follows:
\begin{align*}
\Phi^{2p}(\frac{A+B}{2})\leq \frac{(K(M^{2}+m^{2}))^{2p}}{16M^{2}m^{2}}\Phi^{2p}(A\sharp B).
\end{align*}
\end{remark}
\begin{theorem}\label{ref2}
Let ${\mathbb A}=(A_{1},\cdots,A_{n})$ be a $n$-tuple of positive definite matrices with $0<m\leq A_{i}\leq M,\ (i=1,\cdots,n)$ for some scalars $m\leq M$ and $\omega=(w_{1},\cdots,w_{n})$ a weight vector, and let $t\in [-1, 1]\backslash\{0\}$. Then for every positive unital linear map $\Phi$
\begin{align}\label{34}
P_{t}^{2p}(\omega;\Phi(\mathbb A))\leq\frac{(K(M^{2}+m^{2}))^{2p}}{16M^{2p}m^{2p}}\Phi^{2p}(P_{t}(\omega; {\mathbb A})),
\end{align}
where $p\geq2$ and $K=\frac{(M+m)^{2}}{4mM}$.
\end{theorem}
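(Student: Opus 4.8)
The plan is to imitate the proof of Theorem~\ref{ref} line for line, the only structural change being that the arithmetic mean $\Phi(\sum_{i=1}^n w_i A_i)$ is everywhere replaced by the power mean $P_t(\omega;\Phi(\mathbb A))$ of the transformed tuple $\Phi(\mathbb A)=(\Phi(A_1),\dots,\Phi(A_n))$. The one genuinely new input is that $P_t(\omega;\Phi(\mathbb A))$ inherits exactly the localization that $\Phi(\sum_i w_i A_i)$ enjoyed: since $\Phi$ is unital and positive we have $m\le\Phi(A_i)\le M$, so applying the interpolation \eqref{11} to the tuple $\Phi(\mathbb A)$ gives
\begin{align*}
m\le P_t(\omega;\Phi(\mathbb A))\le\sum_{i=1}^n w_i\Phi(A_i)=\Phi\Big(\sum_{i=1}^n w_i A_i\Big).
\end{align*}
First I would use Lemma~\ref{6}(iii) to rewrite the target inequality \eqref{34} as the equivalent norm estimate
\begin{align*}
\Big\|P_t^{p}(\omega;\Phi(\mathbb A))\,\Phi^{-p}(P_t(\omega;\mathbb A))\Big\|\le\frac14\Big(\frac{K(M^2+m^2)}{Mm}\Big)^{p},
\end{align*}
so that it suffices to bound the norm on the left.

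For that bound I would factor $M^pm^p=K^{p/2}(M^2m^2/K)^{p/2}$ and split the product symmetrically, writing it as $\big(KP_t^2(\omega;\Phi(\mathbb A))\big)^{p/2}$ times $\big(\tfrac{M^2m^2}{K}\Phi^{-2}(P_t(\omega;\mathbb A))\big)^{p/2}$. Lemma~\ref{6}(i) then passes from this product to one quarter of the square of the corresponding sum, and Lemma~\ref{6}(ii) pulls the exponent $p/2$ outside the two summands; using $\|X^{p/2}\|^2=\|X\|^{p}$ for $X>0$ this leaves me to estimate
\begin{align*}
\Big\|KP_t^2(\omega;\Phi(\mathbb A))+\frac{M^2m^2}{K}\Phi^{-2}(P_t(\omega;\mathbb A))\Big\|.
\end{align*}
I would treat the two summands as in Theorem~\ref{ref}, aiming to reduce them to the shape $K\Phi^2(\sum_i w_i A_i)+KM^2m^2\Phi^2(\sum_i w_i A_i^{-1})$, whose norm is at most $K(M^2+m^2)$ by \cite[4.7]{mlin}. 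For the second summand Choi's inequality \eqref{10} replaces $\Phi^{-2}(P_t(\omega;\mathbb A))$ by $\Phi^2(P_t(\omega;\mathbb A)^{-1})$, after which \eqref{11} (in the form $P_t(\omega;\mathbb A)^{-1}\le\sum_i w_i A_i^{-1}$) together with Lemma~\ref{6}(iv) brings it to $KM^2m^2\Phi^2(\sum_i w_i A_i^{-1})$, exactly as in Theorem~\ref{ref}. The first summand is where the new tuple enters: the localization displayed above gives $P_t(\omega;\Phi(\mathbb A))\le\Phi(\sum_i w_i A_i)$ with $m\le P_t(\omega;\Phi(\mathbb A))$, and Lemma~\ref{6}(iv) converts this into a bound on $P_t^2(\omega;\Phi(\mathbb A))$. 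Undoing Lemma~\ref{6}(iii) then yields \eqref{34}.

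The step I expect to be the real obstacle is precisely this first summand. In Theorem~\ref{ref} the first operator was literally $\Phi(\sum_i w_i A_i)$, so its square appeared for free; here it is only the majorant of $P_t(\omega;\Phi(\mathbb A))$, and because $X\mapsto X^2$ is not operator monotone one cannot simply square the relation $P_t(\omega;\Phi(\mathbb A))\le\Phi(\sum_i w_i A_i)$. Routing it through Lemma~\ref{6}(iv) is the natural fix, but that lemma inserts an extra factor $K$, so the delicate point is to check that this factor is absorbed---either by exploiting the slack already present in the \cite[4.7]{mlin} estimate (recall $K\ge1$) or by a sharper treatment of $P_t^2(\omega;\Phi(\mathbb A))$---so that the final constant stays equal to $\tfrac{(K(M^2+m^2))^{2p}}{16M^{2p}m^{2p}}$ rather than degrading. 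Once the first summand is pinned down, the remaining manipulations are a verbatim repetition of those already carried out for \eqref{23}.
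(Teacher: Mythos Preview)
Your overall strategy is right, and you have correctly located the one step that differs from Theorem~\ref{ref}: the first summand is no longer literally the arithmetic mean, so squaring it requires Lemma~\ref{6}(iv). But the obstacle you flag---an extra factor $K$ appearing in the first summand---is an artifact of copying the $K$-weighted splitting $M^{p}m^{p}=K^{p/2}(M^{2}m^{2}/K)^{p/2}$ from Theorem~\ref{ref}. That weighting was chosen there precisely because only the \emph{second} summand had to pass through Lemma~\ref{6}(iv); pre-loading a $K$ on the first summand and a $1/K$ on the second balanced the single $K$ produced by the lemma. Here the situation is symmetric: \emph{both} summands need Lemma~\ref{6}(iv), so both will acquire a factor $K$, and pre-weighting the split throws this off by exactly the factor you were worried about.

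The paper's fix is simply to drop the pre-weighting. One applies Lemma~\ref{6}(i),(ii) with the plain split $M^{p}m^{p}=(Mm)^{p}$ to arrive at
\[
\frac{1}{4}\Big\|P_{t}^{2}(\omega;\Phi(\mathbb A))+M^{2}m^{2}\,\Phi^{-2}(P_{t}(\omega;\mathbb A))\Big\|^{p}.
\]
After Choi \eqref{10} on the second term, Lemma~\ref{6}(iv) is applied to each summand separately (using $m\le P_{t}(\omega;\Phi(\mathbb A))\le M$ for the first and $M^{-1}\le\Phi(P_{t}(\omega;\mathbb A)^{-1})\le m^{-1}$ for the second), giving
\[
\frac{1}{4}\Big\|K\,\Phi^{2}\Big(\sum_{i}w_{i}A_{i}\Big)+KM^{2}m^{2}\,\Phi^{2}\Big(\sum_{i}w_{i}A_{i}^{-1}\Big)\Big\|^{p},
\]
with a single common $K$ in front. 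The \cite[4.7]{mlin} estimate then yields $\tfrac14\big(K(M^{2}+m^{2})\big)^{p}$ on the nose, with no slack to absorb. So your proof goes through once you replace the $K$-weighted split by the symmetric one; no ``sharper treatment'' of $P_{t}^{2}(\omega;\Phi(\mathbb A))$ is needed.
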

\begin{proof}
For $p\geq2$, we have
\begin{align*}
\Big\|P_{t}^{p}(\omega;\Phi(\mathbb A))&M^{p}m^{p}\Phi^{-p}(P_{t}(\omega; {\mathbb A}))\Big\|\\&\leq
\frac{1}{4}\Big\|P_{t}^{p}(\omega;\Phi(\mathbb A))+(M^{2}m^{2})^{\frac{p}{2}}\Phi^{-p}(P_{t}(\omega; {\mathbb A}))\Big\|^{2}\\&
\qquad  \qquad   \qquad \qquad \qquad \qquad  (\textrm{by Lemma \ref{6}(i)})\\&
\leq\frac{1}{4}\Big\|\Big(P_{t}^{2}(\omega;\Phi(\mathbb A))+M^{2}m^{2}\Phi^{-2}(P_{t}(\omega; {\mathbb A}))\Big)^{\frac{p}{2}}\Big\|^{2}\\&
\qquad  \qquad   \qquad \qquad \qquad \qquad (\textrm{by Lemma \ref{6}(ii)})\\&
=\frac{1}{4}\Big\|\Big(P_{t}^{2}(\omega;\Phi(\mathbb A))+M^{2}m^{2}\Phi^{-2}(P_{t}(\omega; {\mathbb A}))\Big)\Big\|^{p}\\&
\leq
\frac{1}{4}\Big\|\Big(P_{t}^{2}(\omega;\Phi(\mathbb A))+M^{2}m^{2}\Phi^{2}(P_{t}^{-1}(\omega; {\mathbb A}))\Big)\Big\|^{p}\\&
\qquad  \qquad   \qquad \qquad \qquad \qquad (\textrm{by \eqref{10}})\\&
\leq
\frac{1}{4}\Big\|K\Big(\sum_{i=1}^{n}w_{i}\Phi(A_{i})\Big)^{2}+M^{2}m^{2}K\Phi^{2}\Big(\sum_{i=1}^{n}w_{i}A_{i}^{-1}\Big)\Big\|^{p}\\&
\qquad  \qquad   \qquad \qquad \qquad \qquad (\textrm{by Lemma \ref{6}(iv)})\\&
=\frac{1}{4}\Big\|K\Phi^{2}\Big(\sum_{i=1}^{n}w_{i}A_{i}\Big)+M^{2}m^{2}K\Big(\sum_{i=1}^{n}w_{i}A_{i}^{-1}\Big)\Big\|^{p}\\&
=\frac{1}{4}(K(M^{2}+m^{2}))^{p}\qquad  \qquad   \qquad (\textrm{by \cite[4.7]{mlin}}).
\end{align*}
Therefore
\begin{align*}
\Big\|P_{t}^{p}(\omega;\Phi(\mathbb A))\Phi^{-p}(P_{t}(\omega; {\mathbb A}))\Big\|\leq\frac{1}{4}\Big( \frac{K(M^{2}+m^{2})}{Mm} \Big)^{p}.
\end{align*}
Since the last inequality is equivalent to \eqref{34}, thus this complets the proof.
\end{proof}
\begin{remark}
As special case for $\mathbb A=(A,B)$ and $\omega=(w_{1},w_{2})$ with $w_{1}=w_{2}=\frac{1}{2}$, Theorem \ref{ref2} is a refinement of Corollary \ref{30}.
\end{remark}

\bibliographystyle{amsplain}

\begin{thebibliography}{99}

\bibitem{And} T. Ando, \textit{Concavity of certain maps on positive definite matrices and applications to
Hadamard products}, Linear Algebra Appl.
\textbf{27} (1979), 203–-241.

\bibitem{Ando} T. Ando and X. Zhan, \textit{Norm inequalities related to operator monotone functions}, Math. Ann.
\textbf{315} (1999), 771–-780.

\bibitem{Ba} M. Bakherad, \textit{Refinements of a reversed AM–-GM operator
inequality}, Linear and Multilinear Algebra \textbf{64}(9) (2016), 1687--1695.

\bibitem{Bha} R. Bhatia and F. Kittaneh, \textit{Notes on matrix arithmetic-geometric mean inequalities}, Linear Algebra
Appl. \textbf{308} (2000), 203–-211.

\bibitem{Ch} M. D. Choi, \textit{A Schwarz inequality for positive linear maps on C.-algebras}, Proc. Amer. Math. Soc, \textbf{8} (1974), 565-–574.


 \bibitem{deh} M. Dehghani, M. Kian and Y. Seo, \textit{Matrix power means and the information monotonicity}, to appear in Linear Algebra Appl. (2017), http://dx.doi.org/10.1016/j.laa.2017.01.025.

\bibitem{fu} X. Fu and C. He, \textit{Some operator inequalities for positive linear maps}, Linear and Multilinear Algebra, \textbf{63}(3) (2015), 571--577.

\bibitem{Fuji}  M. Fujii, S. Izumino, R. Nakamato, and Y. Seo. \textit{Operator inequalities related to Cauchy-
Schwarz and H¨older-McCarthy inequalities}, Nihonkai Math. J., \textbf{8}(2): (1997), 117–-122.

\bibitem{Fu}  T. Furuta, J. Micic Hot, J. Pecaric and Y. Seo, \textit{ Mond–Pecaric method in operator inequal-
ities}, Zagreb, Element, (2005).

\bibitem{Ho} R. A. Horn and C. R. Johnson, \textit{Topics in Matrix Analysis,} Cambridge University Press, (1991).

\bibitem{Law} J. Lawson and Y. Lim, \textit{Karcher means and Karcher equations of positive definite operators}, Trans. Amer. Math. Soc., Series B,\textbf{1} (2014), 1–-22.

\bibitem{pal}Y. Lim and M. P´alfia, \textit{Matrix power means and the Karcher mean}, J. Funct. Anal. \textbf{262} (2012), 1498–-1514.

\bibitem{Yam} Y. Lim and T. Yamazaki, \textit{On some inequalities for the matrix power and Karcher means}, Linear Algebra Appl, \textbf{438} (2013), 1293–-1304.


\bibitem{lin} M. Lin, \textit{On an operator Kantorovich inequality for positive linear maps}, J. Math. Anal. Appl. \textbf{402} (2013), 127-–132.

\bibitem{mlin} M. Lin, \textit{Squaring a reverse AM-GM inequality}, Studia Math., \textbf{215} (2013), 189-–194.

\bibitem{Mar} A.W. Marshall and I. Olkin, \textit{Matrix versions of Cauchy and Kantorovich inequalities}, Aequationes
Math., \textbf{40} (1990), 89-–93.

\bibitem{zh} P. Zhang, \textit{More  operator inequalities for positive linear maps}, Banach J. Math. Anal. \textbf{9} (2015), no. 1, 166–-172.















\end{thebibliography}

\end{document}